\numberwithin{equation}{section}
\theoremstyle{plain}
\newtheorem{theorem}{Theorem}[section]
\newtheorem{proposition}[theorem]{Proposition}
\theoremstyle{definition}
\newtheorem{definition}[theorem]{Definition}
\newcommand{\bR}{\mathbb{R}}
\newcommand{\fS}{\mathfrak{S}}
\DeclareMathOperator*{\mini}{minimise}
\DeclareMathOperator{\Hor}{Hor}
\DeclareMathOperator{\Skew}{Skew}
\DeclareMathOperator{\Sym}{Sym}
\DeclareMathOperator{\Ortho}{\mathrm{O}}
\DeclareMathOperator{\interior}{int}
\DeclareMathOperator{\grad}{grad}
\begin{document}
	
	\begin{frontmatter}
		
		\title{Barycentric subspace analysis of network-valued data}
		%\title{A sample article title with some additional note\thanksref{t1}}
		\runtitle{Barycentric subspace analysis of network-valued data}
		%\thankstext{T1}{A sample additional note to the title.}
		
		\begin{aug}
			%%%%%%%%%%%%%%%%%%%%%%%%%%%%%%%%%%%%%%%%%%%%%%%
			%% Only one address is permitted per author. %%
			%% Only division, organization and e-mail is %%
			%% included in the address.                  %%
			%% Additional information can be included in %%
			%% the Acknowledgments section if necessary. %%
			%% ORCID can be inserted by command:         %%
			%% \orcid{0000-0000-0000-0000}               %%
			%%%%%%%%%%%%%%%%%%%%%%%%%%%%%%%%%%%%%%%%%%%%%%%
			\author[A, B, C]{\fnms{Elodie}~\snm{Maignant}\ead[label=e1]{maignant@zib.de}\orcid{0000-0003-3006-5174}},
			\author[A]{\fnms{Xavier}~\snm{Pennec}\ead[label=e2]{xavier.pennec@inria.fr}\orcid{0000-0002-6617-7664}},
			\author[B]{\fnms{Alain}~\snm{Trouvé}\ead[label=e3]{alain.trouve@ens-paris-saclay.fr}},
			\author[A, D]{\fnms{Anna}~\snm{Calissano}\ead[label=e4]{a.calissano@ucl.ac.uk}\orcid{0000-0002-7403-0531}}
			%%%%%%%%%%%%%%%%%%%%%%%%%%%%%%%%%%%%%%%%%%%%%%
			%% Addresses                                %%
			%%%%%%%%%%%%%%%%%%%%%%%%%%%%%%%%%%%%%%%%%%%%%%
			\address[C]{Zuse Institute Berlin, Germany\printead[presep={,\ }]{e1}}
			\address[A]{Université Côte d’Azur and Inria, France\printead[presep={,\ }]{e2}}
			\address[B]{Université Paris Saclay and ENS Paris-Saclay, France\printead[presep={,\ }]{e3}}
			\address[D]{University College London, UK\printead[presep={,\ }]{e4}}
		\end{aug}
		
		\begin{abstract}
			Certain data are naturally modeled by networks or weighted graphs, be they arterial networks or mobility networks. When there is no canonical labeling of the nodes across the dataset, we talk about unlabeled networks. In this paper, we focus on the question of dimensionality reduction for this type of data. More specifically, we address the issue of interpreting the feature subspace constructed by dimensionality reduction methods. Most existing methods for network-valued data are derived from principal component analysis (PCA) and therefore rely on subspaces generated by a set of vectors, which we identify as a major limitation in terms of interpretability. Instead, we propose to implement the method called barycentric subspace analysis (BSA), which relies on subspaces generated by a set of points. In order to provide a computationally feasible framework for BSA, we introduce a novel embedding for unlabeled networks where we replace their usual representation by equivalence classes of isomorphic networks with that by equivalence classes of cospectral networks. We then illustrate BSA on simulated and real-world datasets, and compare it to tangent PCA.
		\end{abstract}
		
		\begin{keyword}[class=MSC]
			\kwd[Primary ]{62R30}
			\kwd{62H25}
			\kwd{05C50}
		\end{keyword}
		
		\begin{keyword}
			\kwd{Network-valued data}
			\kwd{Dimensionality reduction}
			\kwd{Riemannian manifolds}
		\end{keyword}
		
	\end{frontmatter}
	
	% Open point: notebook or geomstats module
	
	% \anna{add citation to huckermann, vic patagenaru book Non parametric statistics on manifold. add a sentence on ooda}
	
	%%%%%%%%%%%%%%%%%%%%%%%%%%%%%%%%%%%%%%%%%%%%%%%%%%
	\section{Introduction} \label{sec:introduction}
	
	In recent years, there has been a growing interest in the analysis of network-valued data, namely graphs with scalar weights on the edges, due to the various applications in which they arise. Examples of such data are brain connectivity networks \citep{simpson_permutation_2013, durante_nonparametric_2017, calissano_graph_2024}, brain arterial networks \citep{guo_quotient_2021}, anatomical trees \citep{wang_object_2007, feragen_tree_2013}, and mobility networks \citep{von_ferber_public_2009}. Different tasks have been addressed for network-valued data such as classification \citep{tsuda_graph_2010, liu_graph_2020}, prediction \cite{calissano_graph_2022, severn_non_2021}, hypothesis testing \cite{simpson_permutation_2013}, data generation \cite{simonovsky_graphvae_2018, vignac_digress_2022}, and dimensionality reduction \cite{severn_manifold_2022, guo_quotient_2021, calissano_populations_2024}. 
	
	\subsection{Principal component analysis versus barycentric subspace analysis}
	Among the different possible tasks, dimensionality reduction techniques are crucial due to the high complexity of network-valued data. A significant proportion of the works addressing dimensionality reduction revolve around the extension of principal component analysis (PCA) from tangent approaches \cite{severn_manifold_2022, guo_quotient_2021, calissano_populations_2024}. For interpreting the feature subspace, these methods rely on a set of one-dimensional principal components. It might be however misleading as these components consist in extrinsic Euclidean spans such that they blur the original and naturally discrete nature of data. To address this issue, we propose barycentric subspace analysis (BSA) as an alternative method \cite{pennec_barycentric_2018, rohe_low-dimensional_2018}. The general idea of BSA is the same as PCA: computing a subspace which minimizes the projection error of the dataset. The main difference is that such the feature subspace is formulated as a barycentric subspace, meaning it is is generated by a set of points (called reference points) rather than a set of vectors.
	\begin{figure}[ht]
		\includegraphics[scale=.4]{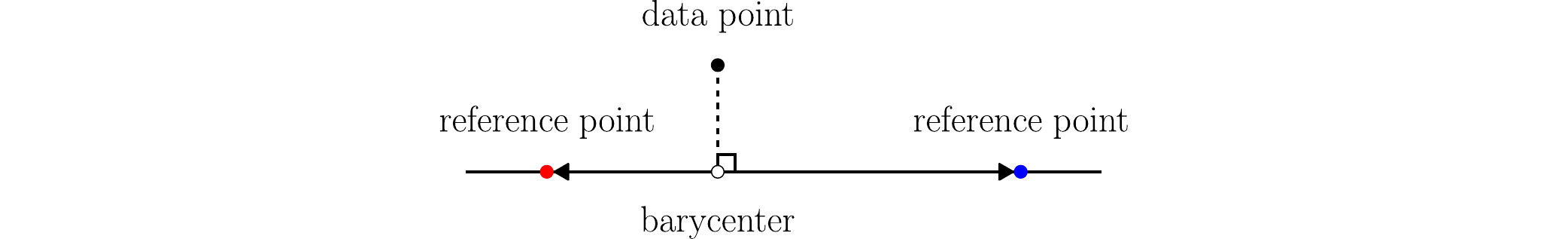}
		\caption{A schematic visualization of barycentric subspace analysis in the Euclidean case.}
		\label{fig:bsa}
	\end{figure}
	
	\noindent If the reference points are constrained to be selected among the data points (sample-limited BSA), the interpretation is straightforward. Motivated by these properties, the aim of the paper is to extend BSA to the case of a of set of network-valued data. 
	
	\subsection{Unlabeled networks}
	When working with a set of network-valued data, there might be some uncertainty around the node labels and the correspondence between nodes across networks. In such cases, the networks are called unlabeled \cite{kolaczyk_averages_2020}. The ambiguity about the correspondence between nodes can vary from a totally unlabeled setting -- two social networks connecting two different groups of individuals -- to a partially labeled setting -- two molecules involving different types of atoms. In this paper, we focus on defining BSA for unlabeled networks. The case of labeled networks can be addressed directly in the Euclidean setting adapting \cite{pennec_barycentric_2018}.
	\smallbreak
	To extend BSA to the case of unlabeled networks, we need a suitable geometric framework. In the literature, the uncertainty around the node labels or node correspondence is often modeled via the action of a permutation group on the nodes of the networks \cite{jain_structure_2009, kolaczyk_averages_2020, calissano_populations_2024}. For example, if weighted undirected networks are represented by their symmetric adjacency matrices, their unlabeled counterpart can be represented in the quotient space
	\begin{equation}
		\mathcal{G}_n = \Sym(n) / \fS_n.
	\end{equation}
	However, performing BSA in this discrete setting is challenging as detailed in \cite{calissano_towards_2023}. In  \cite{severn_manifold_2022}, the authors propose another embedding that is inspired by shape analysis \cite{dryden_statistical_2016} and relies on a continuous group acting on networks. The main drawback of this approach is that the analysis of a network viewed as a shape does not always respect the symmetries that occur naturally in the network. 
	
	\subsection{Our proposal}
	To overcome both the computational limitations of discrete quotient spaces and the difficulties in interpreting the last model, we propose a relaxed embedding based on the action of the orthogonal group by conjugation onto the set of weighted adjacency matrices
	\begin{equation}
		\Gamma_n = \Sym(n) / \Ortho(n).
	\end{equation}
	We show that this space is isometric to the space of sorted eigenvalues, automatically providing a full and computationally efficient geometric framework. In particular, we show that the corresponding quotient distance reduces to the well known spectral distance, which is naturally invariant under node relabeling \citep{jurman_introduction_2011,donnat_tracking_2018}. We call $\Gamma_n$ the spectral graph space of size $n$. Along with the distance function, we equip spectral graph spaces with the tools required to perform BSA, namely an exponential map and its inverse.  
	
	\subsection{Structure of the paper}
	The paper is organized as follows. In Section \ref{sec:geometry}, we introduce spectral graph spaces and we detail their geometric properties. In Section \ref{sec:bsa}, we formulate barycentric subspace analysis and its sample-limited variant in the context of spectral graph spaces. We run a comparison with principal component analysis in Section \ref{sec:bsa_vs_pca}. To further showcase the applicability of BSA, we study a simulated dataset in Section \ref{sec:clustered} and a real-world dataset in Section \ref{sec:airlines}.

	\section{Spectral graph spaces} \label{sec:geometry}
	
	We consider undirected weighted graphs with a finite set of vertices (nodes). Such a graph or network will be given as a triplet $G=(V, E, w)$ consisting of the set $V = \{1, \ldots, n\}$ of nodes, a set $E$ of edges i.e. pairs of nodes, and a scalar function $w$ defined on the edges. Treating non-existing edges as unweighted edges, we can encode the network $G$ by its weighted adjacency matrix $X = (x_{ij}) \in \bR^{n\times n}$ defined by
	\begin{equation}
		x_{ij} = x_{ji} = \left\vert \begin{array}{ll} w\{i,j\} & \quad \text{if} \:\: \{i, j\} \in E \vspace{.1cm}\\ 0  & \quad \text{otherwise} \end{array} \right.
	\end{equation}
	such that networks identify with the set $\Sym(n)$ of symmetric matrices of size $n$. Note that we allow for networks to have self-loops, that is for a node to be connected to itself or equivalently for adjacency matrices to have non-zero diagonal entries.
	\smallbreak
	Now in an effort to relax the modeling of unlabeled networks by the action of node permutations, we consider the conjugation action of the orthogonal group $\Ortho(n)$ on the space $\Sym(n)$ of symmetric matrices
	\begin{equation}
		(R, X) \mapsto RXR^T
	\end{equation}
	which transforms one network into another cospectral network, that is a network whose adjacency matrix shares the same spectrum. The orbit of a network consists then exactly of all the other cospectral networks. As the spectrum of a network controls some of its connectivity properties, for example the degree of the nodes \cite{spielman_spectral_2025}, the action of $\Ortho(n)$ tends to identify networks with a similar structure. Finally, because the permutation group is a subgroup of the orthogonal group, the equivalence class of a given network contains in particular all the other networks obtained by permuting its nodes.
	\smallbreak
	Under such a new group action, unlabeled networks are then given as points of the quotient space $\Gamma_n$ defined by
	\begin{equation}
		\Gamma_n = \Sym(n) / \Ortho(n).
	\end{equation}
	It is a stratified differentiable manifold with singularities at the points where the action of the orthogonal group is not free (see \cite{mather_stratifications_1973} for an introduction to stratified spaces and \cite{le_riemannian_1993} for another example of such differentiable manifold). We can show that these points correspond exactly to the networks with at least two equal eigenvalues. Let then $\Sym(n)^\ast$ be the subset of symmetric matrices whose eigenvalues are pairwise distinct. On this subset, the action of the orthogonal group is proper and free such that the principal stratum $\Gamma^\ast_n = \Sym(n)^\ast / \Ortho(n)$ is a connected differentiable manifold.
	\begin{definition}
		The quotient space $\Gamma_n$ is called the spectral graph space of size $n$.
	\end{definition}
	Let us now investigate the metric structure of the space. Firstly, we study the stratification as a metric space, and secondly, we focus on the Riemannian structure of the principal stratum. Mainly, we show that spectral graph spaces are isometric to convex polyhedral cones. From this result, we derive in particular that the Riemannian logarithm, central to the definition of the notion of barycenter, consists simply in a translation, and moreover that it extends to the whole stratification.
	
	\subsection{Spectral graph spaces as metric spaces} \label{sec:distance}
	The graph space $\Gamma_n$ is naturally equipped with the quotient distance induced by the $\Ortho(n)$-invariant Frobenius distance on symmetric matrices. More precisely, if $\pi$ denotes the canonical projection, then the function 
	\begin{equation}
		\label{eq:quotient_distance}
		d\left( \pi(X), \pi(Y) \right) = \inf_{R\in \Ortho(n)} \left\|RYR^T - X\right\|_F
	\end{equation}
	is well defined and satisfies all the axioms of a distance function. Such a construction is rather standard in the context of shape analysis \cite{younes_shapes_2010} and makes $(\Gamma_n, d)$  a complete metric space. Since the permutation group is a subgroup of the orthogonal group as we already highlighted, the distance between two networks thus defined is always smaller than their Frobenius distance up to some permutation of their nodes. In other words, two networks with a similar structure have similar equivalence classes. Now we have the following result.
	\begin{proposition}
		\label{prop:distance}
		Let $X, Y \in \Sym(n)$. Then the distance between the two corresponding networks is simply the $\ell^2$ spectral distance
		\begin{equation}
			\label{eq:spectral_distance}
			d\left( \pi(X), \pi(Y) \right) = \bigg( \sum_{i=1}^p \left|\lambda_i(X) - \lambda_i(Y)\right|^2 \bigg)^{1/2}
		\end{equation}
		where $\lambda_1(X) \leq \cdots \leq \lambda_n(X)$ and $\lambda_1(Y) \leq \cdots \leq \lambda_n(Y)$ denote the eigenvalues (with their multiplicity) of $X$ and $Y$ respectively.
	\end{proposition}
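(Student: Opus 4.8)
The plan is to reduce the minimization over $\Ortho(n)$ to the maximization of a trace functional, and then to settle that maximization by combining the Birkhoff--von Neumann theorem with the rearrangement inequality. This is in essence the classical Hoffman--Wielandt theorem, which one could also invoke directly; below I sketch the self-contained argument.

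First I would expand the squared objective. Because conjugation by an orthogonal matrix preserves the Frobenius norm, $\|RYR^T\|_F = \|Y\|_F$, so that
\begin{equation*}
	\left\|RYR^T - X\right\|_F^2 = \|X\|_F^2 + \|Y\|_F^2 - 2\,\trace\big(RYR^T X\big).
\end{equation*}
Minimizing the left-hand side over $R \in \Ortho(n)$ is therefore equivalent to maximizing $\trace(RYR^TX)$. Diagonalizing via the spectral theorem, $X = U\Lambda_X U^T$ and $Y = V\Lambda_Y V^T$ with $\Lambda_X,\Lambda_Y$ diagonal and $U,V \in \Ortho(n)$, and absorbing $U,V$ into the optimization variable, the problem reduces to maximizing $\trace\big(Q\Lambda_Y Q^T \Lambda_X\big)$ over $Q = (q_{ij}) \in \Ortho(n)$.

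Next I would expand this trace explicitly as
\begin{equation*}
	\trace\big(Q\Lambda_Y Q^T \Lambda_X\big) = \sum_{i,j} \lambda_i(X)\,\lambda_j(Y)\, q_{ij}^2 .
\end{equation*}
The key observation is that $P = (q_{ij}^2)$ is doubly stochastic, since the rows and columns of the orthogonal matrix $Q$ are unit vectors; hence the quantity above is a linear functional of $P$ ranging over a subset of the Birkhoff polytope of doubly stochastic matrices. By the Birkhoff--von Neumann theorem that polytope is the convex hull of the permutation matrices, so the linear functional is maximized at a permutation matrix; moreover a permutation matrix is itself orthogonal, so this upper bound is actually attained and
\begin{equation*}
	\max_{Q \in \Ortho(n)} \trace\big(Q\Lambda_Y Q^T \Lambda_X\big) = \max_{\sigma \in \fS_n} \sum_i \lambda_i(X)\,\lambda_{\sigma(i)}(Y).
\end{equation*}
A final application of the rearrangement inequality shows that, with both spectra sorted in increasing order, the identity permutation is optimal, giving the value $\sum_i \lambda_i(X)\lambda_i(Y)$. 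Substituting back and using $\|X\|_F^2 = \sum_i \lambda_i(X)^2$ and $\|Y\|_F^2 = \sum_i \lambda_i(Y)^2$ yields
\begin{equation*}
	\inf_{R \in \Ortho(n)} \left\|RYR^T - X\right\|_F^2 = \sum_i \big(\lambda_i(X) - \lambda_i(Y)\big)^2 ,
\end{equation*}
which is the claimed formula after taking square roots.

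I expect the main obstacle to be the passage from orthogonal conjugation to permutations, i.e.\ justifying that restricting attention from $\Ortho(n)$ to the permutation group loses nothing. The cleanest route is precisely the doubly-stochastic relaxation above: recognizing $(q_{ij}^2)$ as a point of the Birkhoff polytope and using that its extreme points are permutation matrices. As a minor side point, since $\Ortho(n)$ is compact and the objective is continuous the infimum is attained, so ``$\inf$'' may be replaced by ``$\min$''.
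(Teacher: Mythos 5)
Your proof is correct, and it takes a genuinely different route from the paper. The paper argues variationally on the group: it computes the gradient of $R \mapsto \|RYR^T - X\|_F^2$ along $\Ortho(n)$, shows the first-order condition forces $X$ and $RYR^T$ to commute, hence to be simultaneously diagonalizable, and then compares the resulting critical values $\sum_i |\lambda_i(X) - \lambda_{\sigma(i)}(Y)|^2$ over permutations $\sigma$ via the rearrangement inequality, along the way exhibiting the optimal aligner $R = Q_X Q_Y^T$ explicitly. You instead expand the square, reduce to maximizing $\trace\big(Q\Lambda_Y Q^T \Lambda_X\big)$, and pass through the doubly stochastic relaxation: recognizing $(q_{ij}^2)$ as a point of the Birkhoff polytope, invoking Birkhoff--von Neumann to push the maximum to a permutation matrix, and noting that permutation matrices are themselves orthogonal so the relaxation is tight --- this is the standard proof of Hoffman--Wielandt, which you rightly note could be cited outright. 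Your global argument buys some robustness: it certifies the minimum directly without classifying critical points, whereas the paper's ``minimized only if'' logic implicitly relies on existence of a minimizer (compactness of $\Ortho(n)$, which you do state) and on checking all critical configurations; it is also more elementary, needing no Riemannian gradient computation on the group. What the paper's route buys in exchange is the explicit description of the critical set (commuting pairs) and of the optimal alignment $R = Q_X Q_Y^T$, which the authors reuse later for the spectral network reconstruction of Section 3.3; if you wanted that byproduct from your argument, you would extract it by observing that the optimal permutation $Q = \mathrm{Id}$ corresponds to $R = UV^T$, i.e.\ exactly $Q_X Q_Y^T$ in the paper's notation. Both proofs share the final rearrangement step, and both correctly treat eigenvalues sorted in increasing order with multiplicity.
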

	Let us comment on this result. First, it is interesting to note that $\ell^p$ spectral distances are well known distances between networks \citep{jurman_introduction_2011, donnat_tracking_2018}. Then, what the proposition above actually tells us is that the map $\lambda = (\lambda_1, \ldots, \lambda_n)$ defined accordingly, and constant over each equivalence class, descends to an isometry between the graph space $\Gamma_n$ and the convex cone of sorted vectors of $\bR^n$ according to the commutative diagram below.
	\begin{equation}
		\label{eq:lambda}
		\begin{tikzcd}
			\Sym(n) \arrow{r}{\lambda} \arrow[swap]{d}{\pi} & \hspace{.1cm} C_n & [-1.1cm] = \{\mu \in \bR^n \: \vert \: \mu_1 \leq \cdots \leq \mu_n \} \\
			[.2cm] \Gamma_n \arrow[swap, dashed]{ur}{\cong}
		\end{tikzcd}.
	\end{equation}
	An important consequence of this statement is that the geodesics (distance-minimizing curves) of the graph space $\Gamma_n$ identify with that of the cone, such that there exists a unique minimizing geodesic segment between any two networks described by the straight line segment
	\begin{equation}
		\label{eq:geodesic}
		\gamma(t) = (1-t)\lambda(X) + t\lambda(Y).
	\end{equation}
	For further insight into these different results, we refer to Alekseevsky et al. \cite{alekseevsky_riemannian_2003}. In fact, the map $\lambda$ descends to a diffeomorphism between the principal stratum $\Gamma_n^\ast$, which we recall consists of the quotient by the orthogonal group of symmetric matrices with pairwise distinct eigenvalues, and the interior of the open cone $\interior(C_n)$. Let us now extend the diagram \ref{eq:lambda} by equipping the manifold $\Gamma^\ast_n$ with a Riemannian structure compatible with the existing metric structure.
	
	\subsection{The Riemannian structure of spectral graph spaces} \label{sec:metric}
	One way to endow the quotient space $\Gamma_n^\ast$ with a Riemannian metric is to leverage the Riemannian submersion theorem as detailed in \cite{tumpach_three_2023}. Namely, that there exists a unique Riemannian metric $g$ on $\Gamma_n^\ast$ that makes the surjective map $d\pi$ an isometry between the horizontal subbundle
	\begin{equation}
		H\Sym(n)^\ast = (\ker d\pi )^\perp
	\end{equation}
	and the tangent bundle $T\Gamma_n^\ast$. Moreover, the corresponding Riemannian distance coincides with the quotient distance as introduced in \ref{eq:quotient_distance}. Note that such a construction is also precisely the one developed in Kendall and Le \cite{le_riemannian_1993} to equip shape spaces with a Riemannian metric. Let us then construct the metric $g$ explicitly. First, we prove the following. 
	\begin{proposition}
		\label{prop:horizontal}
		Let $x\in \Sym(n)^\ast$. Let us write $x = Q_X \operatorname{diag}(\lambda_1(X), \ldots, \lambda_n(X)) Q_X^T$ for $Q_X \in \Ortho(n)$ unique. Then the horizontal subspace at $x$ is
		\begin{equation}
			H_X\Sym(n)^\ast = \Big\{Q_X \operatorname{diag}(h_1, \ldots, h_n) Q_X^T \: \vert \: (h_1, \ldots, h_n) \in \bR^n\Big\}.
		\end{equation}
		In particular, it means that the horizontal bundle identifies with the vector space $\bR^n$.
	\end{proposition}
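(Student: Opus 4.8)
The plan is to exploit the $\Ortho(n)$-invariance of the Frobenius metric to reduce to the case of a diagonal base point, compute the vertical space $\ker d\pi$ by differentiating the orbit map, and then read off its orthogonal complement.

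First I would identify the vertical subspace at $x$ as the tangent space to the $\Ortho(n)$-orbit through $x$. Differentiating the curve $t \mapsto \exp(tA)\,x\,\exp(tA)^T$ along a one-parameter subgroup generated by a skew-symmetric $A \in \mathfrak{o}(n)$, and using $\exp(tA)^T = \exp(-tA)$, yields the commutator $Ax - xA$. Hence the vertical space is $V_x = \{Ax - xA \: \vert \: A \text{ skew-symmetric}\}$, a subspace of $T_x\Sym(n)^\ast = \Sym(n)$.

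Next, since conjugation by $Q_X$ is a linear isometry of $(\Sym(n), \|\cdot\|_F)$ carrying $D = \operatorname{diag}(\lambda_1(X), \ldots, \lambda_n(X))$ to $x$, it maps vertical vectors to vertical vectors and preserves orthogonality, so it suffices to treat the diagonal case and conjugate back. A direct calculation gives $(AD - DA)_{ij} = a_{ij}\,(\lambda_j(X) - \lambda_i(X))$: the diagonal entries vanish, and because the eigenvalues are pairwise distinct on $\Sym(n)^\ast$ we have $\lambda_j(X) - \lambda_i(X) \neq 0$ for $i \neq j$, so the off-diagonal entries sweep out all of $\bR$ as $A$ varies. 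Therefore $V_D$ is precisely the space of symmetric matrices with vanishing diagonal. Since the Frobenius inner product splits the diagonal and off-diagonal parts orthogonally, the horizontal space at $D$ is the space of diagonal matrices $H_D = \{\operatorname{diag}(h_1, \ldots, h_n) \: \vert \: h \in \bR^n\}$, and conjugating by $Q_X$ gives the stated formula for $H_X\Sym(n)^\ast$.

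Finally, I would note that although $Q_X$ is only determined up to the signs of its columns (each eigenspace being one-dimensional), flipping a column sign leaves the formula unchanged, so the subspace is well defined; the map $(h_1, \ldots, h_n) \mapsto Q_X \operatorname{diag}(h_1, \ldots, h_n) Q_X^T$ then furnishes the identification with $\bR^n$. I expect the only delicate point to be verifying that distinctness of the eigenvalues is exactly the hypothesis that makes the off-diagonal block of $V_D$ fill out completely; the remaining steps are routine linear algebra.
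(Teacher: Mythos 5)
Your proof is correct, and while it shares the paper's starting point---identifying the vertical space $\ker d_X\pi$ with the orbit tangent space $\{AX - XA \:\vert\: A \in \Skew(n)\}$ by differentiating curves in the fiber---it finishes by a genuinely different computation. The paper never reduces to a diagonal base point: it characterizes the orthogonal complement via the adjoint identity $\langle U, AX - XA \rangle_F = \langle XU - UX, A\rangle_F$, observes that $XU - UX$ is skew-symmetric so that orthogonality to all commutators forces $XU - UX = 0$, and concludes that $U$ is horizontal if and only if $U$ commutes with $X$, i.e.\ if and only if $Q_X^T U Q_X$ is diagonal (this last equivalence is where simplicity of the spectrum enters). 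You instead use equivariance of the action under the isometry of conjugation by $Q_X$ to reduce to $D = \operatorname{diag}(\lambda_1(X), \ldots, \lambda_n(X))$ and compute the vertical space entrywise, $(AD - DA)_{ij} = a_{ij}(\lambda_j(X) - \lambda_i(X))$, showing it is exactly the zero-diagonal symmetric matrices, whose Frobenius complement is visibly the diagonal matrices. The two arguments invoke the distinct-eigenvalue hypothesis at dual points: the paper uses it to cut the commutant of $X$ down to the diagonal matrices, you use it to show the vertical space fills out the whole off-diagonal part; if eigenvalues coincided, your vertical space would shrink and the paper's commutant would grow, both correctly yielding a larger horizontal space. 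The paper's route buys a coordinate-free intermediate characterization (horizontal means commuting with $X$) which is conceptually reusable; your route buys an explicit dimension count for the vertical space and, usefully, repairs a gloss in the statement itself: $Q_X$ is in fact only unique up to flipping the signs of its columns even when the spectrum is simple, and your closing observation that the stated formula is invariant under such flips is exactly what makes the subspace well defined---a point the paper's proof passes over in silence.
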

	Now let us consider two tangent vectors at $\pi(X)$ and let $U = Q_X \operatorname{diag}(h_1, \ldots, h_n) Q_X^T$ and $V = Q_X \operatorname{diag}(k_1, \ldots, h_n) Q_X^T$ be their horizontal lifts at $X$, that is their preimages in the horizontal subspace at $X$. Note that the diagonal entries of $u$ and $v$ are independent of the choice of the representative $X$. By construction, we have
	\begin{equation}\label{eq:metric}
		g_{\pi(X)}\left(d_{X}\pi\left(U\right), d_{X}\pi\left(V\right)\right) = \langle U, V \rangle_F = \sum_{i=1}^n h_i k_i
	\end{equation}
	which is non-other than the Euclidean metric of $\bR^n$ such that $\Gamma^\ast$ is isometric to the open cone $\interior(C_n)$ of $\bR^n$ as a Riemannian manifold, or in other words such that the map $\lambda$ descends to a Riemannian isometry. Another way to prove such a result is thanks to Lee Myers Steerod's theorem that states that an isometry between two Riemannian manifolds is always a Riemannian isometry. 
	
	\subsection{Geodesics and extension of the Riemannian logarithm} \label{sec:log}
	Now in particular, this result tells us that on the principal stratum, the Riemannian geodesic joining two networks identify exactly with the geodesic segment described in \ref{eq:geodesic} joining their respective spectra, and whose expression can also be derived from that of horizontal geodesics, which describe exactly the geodesics of the quotient space $\Gamma_n^\ast$ given that $\pi$ is a Riemannian submersion \cite{oneill_fundamental_1966}. In this respect, the geodesics of $\Gamma_n$ as a metric space extend the Riemannian geodesics of the main stratum to all the strata. Accordingly, we extend the Riemannian logarithm to $\Gamma_n$ as the inverse of the corresponding unique geodesic segment
	\begin{equation} \label{eq:log}
		\log_{\pi(X)}(\pi(Y)) = \lambda(Y) - \lambda(X).
	\end{equation}
	From the notion of Riemannian logarithm, one derives that of a barycenter, or weighted Fréchet mean. With such a tool now in our hands, we then move on to describing the barycentric geometry of the spectral graph spaces and setting up the barycentric subspace analysis.
	
	\section{Barycentric subspace analysis on spectral graph spaces} \label{sec:bsa}
	
	Essentially, barycentric subspace analysis (BSA) consists in the approximation of a collection of data points by a lower-dimensional barycentric subspace. In this section, we review the mathematical construction of BSA in the specific case of spectral graph spaces, starting with the definition of a barycentric subspace. We refer to the original work by Pennec \cite{pennec_barycentric_2018} for a general formulation of the method on a Riemannian manifold.
	
	\subsection{Barycentric subspaces of spectral graph spaces} \label{sec:bs}
	A barycentric subspace is defined for a finite family of points as the set of all their barycenters. Concretely, the barycentric subspace of the points $\pi(A_0),\ldots, \pi(A_k) \in \Gamma_n$ is defined as
	\begin{equation}
		\operatorname{BS}(\pi(A_0),\ldots, \pi(A_k)) = \bigcup_{\substack{w_0, \ldots, w_k \in \bR \vspace{.05cm}\\ w_0 + \cdots + w_k = 1}}\bigg\{ \pi(X) \in \Gamma_n \: \Big\vert \: \sum_{i=0}^k w_i \log_{\pi()}(\pi(A_i)) = 0 \bigg\}.
	\end{equation}
	The points $\pi(A_0),\ldots \pi(A_k)$ are then called the reference points of the subspace. From the definition of the logarithm in \ref{eq:log}, we derive an explicit formulation of the barycentric subspaces of spectral graph spaces.
	\begin{theorem}
		\label{th:barycentric_subspace}
		Let $A_0, \ldots A_k \in \Sym(n)$. Then the barycenters of $\pi(A_0),\ldots, \pi(A_k)$ identify exactly with barycenters of $\lambda(A_0),\ldots \lambda(A_k)$ and the corresponding barycentric subspace is isometric to the convex set
		\begin{multline}
			\operatorname{BS}\left(\pi(A_0),\ldots, \pi(A_k)\right) \simeq \bigg\{ \sum_{i=0}^k w_i \lambda(A_i) \: \Big\vert \: \sum_{i=0}^k w_i = 1 \:\:  \\ \textnormal{and} \:\: \sum_{i=0}^k w_i \lambda_r(A_i) \leq \sum_{i=0}^k w_i \lambda_{r+1}(A_i) \:\: \textnormal{for} \:\: 1 \leq r \leq n-1 \bigg\}.
		\end{multline}
		It is a convex polytope with at most $n-1$ facets, and it corresponds to the cross-section of the cone $C_n$ passing through the vectors $\lambda(A_0), \ldots, \lambda(A_k)$. In particular, the barycentric subspace of $\pi(A_0),\ldots, \pi(A_k)$ always contains the convex hull of $\lambda(A_0), \ldots, \lambda(A_k)$. 
	\end{theorem}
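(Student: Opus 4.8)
The plan is to reduce everything to the cone picture via the isometry $\lambda$ of Proposition~\ref{prop:distance} together with the explicit logarithm~\eqref{eq:log}. First I would substitute the logarithm into the defining barycentric equation: for any weights with $\sum_{i=0}^k w_i = 1$,
\[
\sum_{i=0}^k w_i \log_{\pi(X)}(\pi(A_i)) = \sum_{i=0}^k w_i\bigl(\lambda(A_i) - \lambda(X)\bigr) = \Bigl(\sum_{i=0}^k w_i \lambda(A_i)\Bigr) - \lambda(X),
\]
where the constraint $\sum_i w_i = 1$ is exactly what collapses the $\lambda(X)$ terms. Hence the barycentric condition is equivalent to $\lambda(X) = \sum_{i=0}^k w_i \lambda(A_i)$, which already yields the first assertion: $\pi(X)$ is a barycenter of the $\pi(A_i)$ precisely when $\lambda(X)$ is an affine combination, with the same weights, of the $\lambda(A_i)$, so barycenters correspond bijectively under $\lambda$ to barycenters of the spectra.

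Next I would characterise the image of the barycentric subspace under $\lambda$. Since $\lambda\colon \Gamma_n \to C_n$ is a bijection, with surjectivity clear because $\operatorname{diag}(\mu)$ realises any sorted $\mu \in C_n$, a point $\mu \in C_n$ lies in $\lambda(\operatorname{BS}(\pi(A_0),\ldots,\pi(A_k)))$ if and only if $\mu = \sum_i w_i \lambda(A_i)$ for some weights summing to one, i.e.\ if and only if $\mu$ belongs to the affine hull $\operatorname{aff}(\lambda(A_0),\ldots,\lambda(A_k))$. Therefore
\[
\lambda\bigl(\operatorname{BS}(\pi(A_0),\ldots,\pi(A_k))\bigr) = \operatorname{aff}(\lambda(A_0),\ldots,\lambda(A_k)) \cap C_n.
\]
Writing out membership in $C_n$ as the $n-1$ sorting inequalities applied to $\mu = \sum_i w_i \lambda(A_i)$ produces exactly the convex set displayed in the statement, and since $\lambda$ is an isometry this identification transports into the claimed relation $\simeq$.

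Finally I would read off the geometry. The right-hand side is the intersection of an affine subspace with the polyhedral cone $C_n$, which is by definition the cross-section of $C_n$ through the points $\lambda(A_0),\ldots,\lambda(A_k)$; as such an intersection it is convex, and each of its facets can only arise from one of the $n-1$ defining inequalities $\mu_r \leq \mu_{r+1}$ of $C_n$ becoming active, giving the bound of at most $n-1$ facets. For the containment of the convex hull, every convex combination $\sum_i w_i \lambda(A_i)$ with $w_i \geq 0$ already lies in $C_n$, because $C_n$ is convex and contains each $\lambda(A_i)$, so the convex hull is contained in $\operatorname{aff}(\lambda(A_0),\ldots,\lambda(A_k)) \cap C_n$ and hence in the barycentric subspace. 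The only point requiring genuine care throughout is this cone constraint: the naive computation returns the entire affine hull, and the substance of the theorem is that only its portion lying in the sorted cone $C_n$ corresponds to actual networks. The main task is therefore to track when an affine combination of sorted vectors is itself sorted, which is precisely what separates the barycentric subspace from its ambient affine span, and the convex hull from both.
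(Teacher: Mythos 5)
Your proposal is correct and follows essentially the same route as the paper's proof: substitute the explicit logarithm \eqref{eq:log} into the barycentric equation, use $\sum_i w_i = 1$ to collapse to $\lambda(X) = \sum_i w_i \lambda(A_i)$, and impose the membership constraint $\lambda(X) \in C_n$ to obtain the sorted-cone cross-section. The only difference is cosmetic: you make the facet bound and convex-hull containment explicit (the paper leaves them implicit), while the paper's appendix additionally derives the half-space coefficients $(\alpha_r, \beta_r)$ of \ref{eq:halfspace1}--\ref{eq:halfspace2} via the observation that $\theta_r \in \ker(\Lambda)^\perp = \mathrm{im}(\Lambda^T)$, which concerns the representation stated after the theorem rather than the theorem itself.
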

	Note that if the spectra $\lambda(A_0),\ldots, \lambda(A_k)$ are affinely independent, then the barycentric subspace of $\pi(A_0),\ldots, \pi(A_k)$ is a subspace of $\Gamma_n$ of dimension $k$. Now, one can also write the polytope $\operatorname{BS}\left(\pi(A_0),\ldots, \pi(A_k)\right)$ as the intersection of the following $n-1$ half-spaces
	\begin{equation}
		\label{eq:halfspace1}
		\begin{array}{rccccrccl}
			\alpha_{11} & \lambda_1(X) & \: + \: & \cdots & \: + \: & \alpha_{1n} & \lambda_n(X) & \: \geq \: & \beta_1 \\
			& & & & & & & \vdots & \\
			\alpha_{n-11} & \lambda_1(X) & \: + \: & \cdots & \: + \: & \alpha_{n-1n} & \lambda_n(X) & \: \geq \: & \beta_{n-1}
		\end{array}
	\end{equation}
	where for $1\leq r \leq n-1$, the pair $(\alpha_r, \beta_r)$ solves
	\begin{equation}
		\label{eq:halfspace2}
		\begin{bmatrix} \lambda_1(A_0) & \cdots & \lambda_n(A_0) & 1 \\ \vdots & & \vdots & \vdots \\ \lambda_1(A_k) & \cdots & \lambda_n(A_k) & 1 \end{bmatrix} \begin{bmatrix} \alpha_{r1} \\ \vdots \\ \alpha_{rn} \\ -\beta_r\end{bmatrix} = \begin{bmatrix} \lambda_{r+1}(A_0) - \lambda_r(A_0) \\ \vdots \\ \lambda_{r+1}(A_k) - \lambda_r(A_k) \end{bmatrix}.
	\end{equation}
	The derivations leading to \ref{eq:halfspace1} and \ref{eq:halfspace2} as well the proof of the theorem are detailed in Appendix \ref{appendix:proofs}. We illustrate these results for $k=2$ in Appendix \ref{appendix:barycentric_subspace}.
	\smallbreak
	BSA then consists in finding the barycentric subspace that minimizes the distance to the dataset, in our case the spectral distance. Optimizing for such a subspace is a combination of the search for the reference points and the projection of the data onto the corresponding barycentric subspace. Since the latter is a projection onto a closed convex set, it has a unique minimum which can be achieved by usual gradient descent methods on $\bR^{n}$. Without additional constraint, optimal reference points are not well defined, however, as the same $k$-dimensional barycentric subspace can be generated by any family of $k+1$ affinely independent points of the subspace. Here, we consider more specifically what is referred to as sampled-limited barycentric Subspace Analysis (sample-limited BSA) in the original paper by Pennec \cite{pennec_barycentric_2018}.
	
	\subsection{Sample-limited barycentric subspace analysis} \label{sec:sample_limited_bsa}
	In this BSA variant, the optimal reference points are selected from among the data points so they are directly interpretable as such, following the same approach as in $k$-medoids clustering \cite{kaufman_partitioning_1990}. In particular, rather than reference spectra, we obtain reference networks. This in turn increases the interpretability of the low-dimensional subspace, whether used for visualization purposes or as a support for lower-dimensional analyses of the dataset. Mathematically speaking, sample-limited BSA consists of the following optimization problem
	\begin{equation}
		\label{eq:graph_bsa}
		\begin{split}
			\mini_{\substack{\vspace{.05cm}\\ 1\leq i_0 < \cdots < i_k \leq N \vspace{.05cm}\\ w_{ij} \in \bR \vspace{.05cm}\\ \forall 1 \leq i \leq N, \: w_{i0} + \cdots + w_{ik} = 1}} \quad & \sum_{i=1}^N\sum_{r=1}^n \Big|\lambda_r(X_i) - \sum_{j=0}^k w_{ij} \lambda_r(X_{i_j})\Big|^2\\
			\text{subject to} \quad & \sum_{j=0}^k w_{ij} \lambda_r(X_{i_j}) < \sum_{j=0}^k w_{ij} \lambda_{r+1}(X_{i_j}) \qquad \forall i, r.  
		\end{split}
	\end{equation}
	The search for reference points is now a well defined but combinatorial optimization problem. This poses a major computational challenge. One way of reducing the computational cost of the problem would be to restrict the search of the reference networks to pre-assigned clusters. This approach seems particularly relevant in the case of a mixed distribution as for example for the dataset studied in Section \ref{sec:clustered}. Another way of doing so could be to enforce some minimal distance between any two reference networks, thereby selecting more contrasting structures. Additionally, note that the datasets we are interested in the context of geometric data analysis are in practice rather small, since larger ones are already well-covered by linear techniques.
	\smallbreak
	Finally, to further increase the interpretability of the projection, we may enforce the reference networks to be extreme points of the dataset. More precisely, we restrict the projection to the convex hull of the reference networks and solve Problem \ref{eq:graph_bsa} with the additional constraint that the barycentric weights must be positive. We refer to this method as sample-limited convex BSA. It is particularly suited for the identification of archetypal points \cite{cutler_archetypal_1994}. Another aspect is that sample-limited convex BSA should be more robust to noise than standard sample-limited BSA as the volume of the convex hull of given reference networks is significantly smaller than that of their barycentric subspace (see Appendix \ref{appendix:barycentric_subspace}).
	
	\subsection{Spectral network reconstruction} \label{sec:reconstruction}
	In terms of interpretability, the use of spectral graph spaces to implement BSA raises its own issues. Precisely, the projection of a network onto the optimal barycentric subspace is no longer a network but only the spectrum of one. Still, it might be interesting for some further analysis to recover such a projection as a network, for instance when combining dimension reduction with another method specifically designed for network-valued data. Given the projected spectrum of a data point, a canonical candidate is the network with this spectrum that is the closest to the data point (with respect to the Frobenius distance on the space of symmetric matrices). Based on a similar proof to that of Proposition \ref{prop:distance}, we can see that such an optimum corresponds exactly to the conjugation of the projected spectrum by the orthogonal matrix that achieves the eigendecomposition of the original data point
	\begin{equation}
		\widehat{x}_i = R_i \operatorname{diag} \bigg(\sum_{j=1}^k w_{ij}\lambda(X_{i_j})\bigg) R_i^T \quad \textnormal{where} \quad X_i = R_i \operatorname{diag}(\lambda(X_i)) R_i^T.
	\end{equation}
	The reconstruction error is then the same as the projection error in the spectral graph spaces. Note that the orthogonal matrix $R_i$ is not unique when $X_i$ has multiple equal eigenvalues. Whenever this occurs, we simply choose one of the possible eigendecompositions. This reconstruction algorithm can be seen as a low-rank approximation of $X_i$. An artifact introduced by the procedure is the occurrence of self-loops within the reconstructed networks, despite the data not having any in the first place (see Figure \ref{fig:spectral_reconstruction}). This problem can be avoided by reconstructing explicitly a network with no diagonal entries instead, for example the closest such network to the original data point. This is possible because every zero trace matrix is orthogonally similar to at least one zero diagonal matrix \cite{fillmore_similarity_1969}. However, we observe in the context of BSA that self-loops reconstructed by the first algorithm carry a marginal weight, so that the two algorithms output close solutions. It should be the case as long as the projection error is small enough.
	\begin{figure}[t]
		\includegraphics[scale=.4]{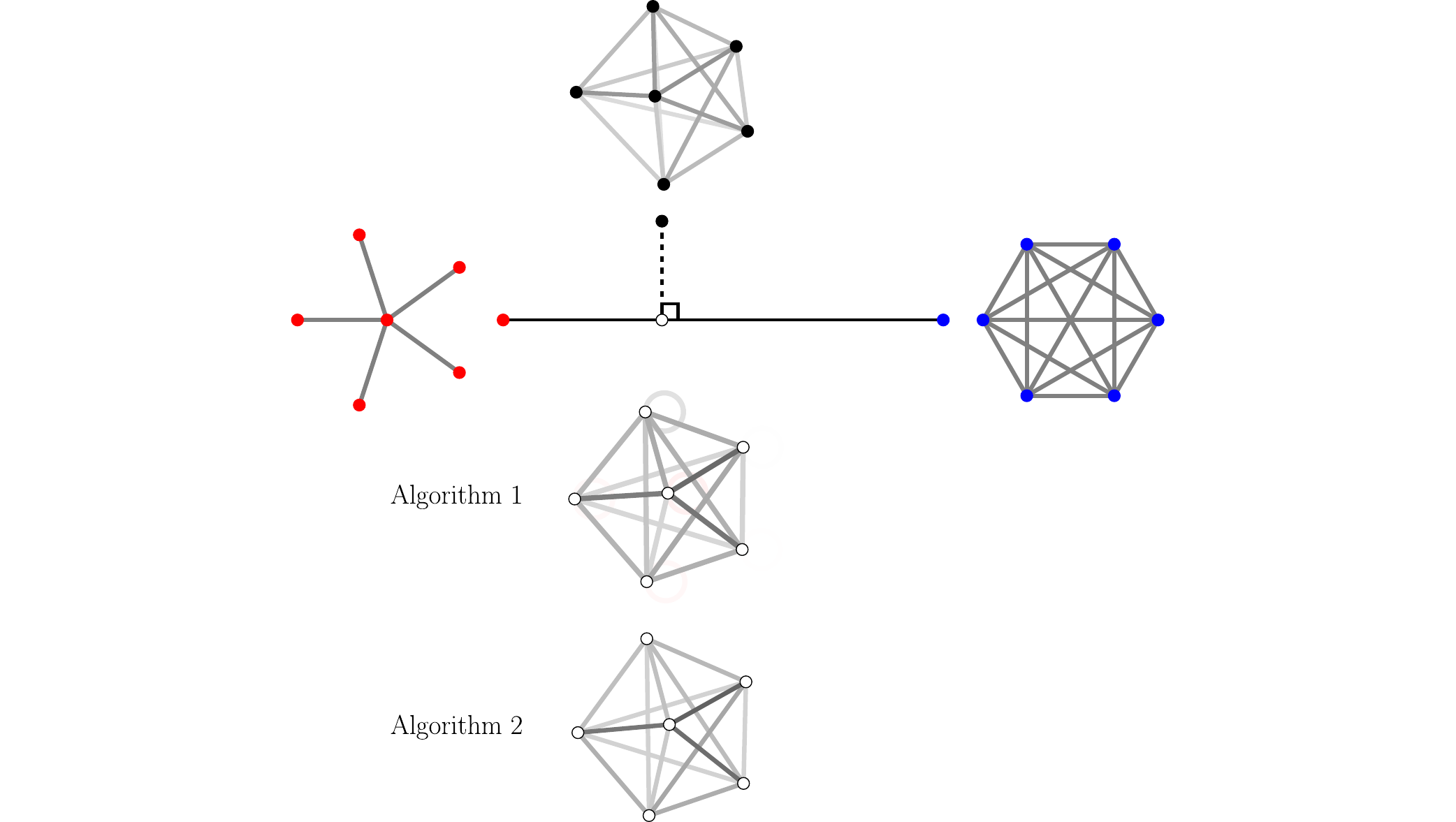}
		\caption{Spectral network reconstruction of the projection of a random network onto the barycentric subspace of a complete network and a star. The networks here are drawn in such a way that the length of the edge $\{i, j\}$ satisfies $x_{ij} \propto \exp[-\ell\{i, j\}^2]$. To control the edge length, we rely on the Kamada-Kawai layout available in the Python library NetworkX \cite{networkx_2008}. We keep this visualization throughout the paper.}
		\label{fig:spectral_reconstruction}
	\end{figure}
	
	\section{A comparison with tangent principal component analysis} \label{sec:bsa_vs_pca}
	
	In this first experiment, we wish to underline the better interpretability of the low-dimensional subspace computed by sample-limited BSA over that computed by existing dimensionality reduction methods such as geodesic PCA \cite{calissano_populations_2024} or tangent PCA \cite{guo_quotient_2021}. We focus the comparison on tangent PCA for it is straightforward to implement. Through our experiment, we show in particular that when it comes to analyzing the structural variability of networks, reference points, that is networks, are more intuitive quantities than vector components.
	\smallbreak
	To this end, we consider a two-dimensional set of networks featuring changes of topology, in the sense of the edge sets $E$. We generate the dataset as a two-parameter dataset $X(s_i, t_i)$ in the following fashion. The network $X(s, t)$ consists of $n=6$ nodes, two of which are always connected by a central edge of weight $s$, while the connectivity of the others depends on the value of $t$. If $t=0$, then $X(s, t)$ consists of only five edges and has a tree topology. Else, two edges of weight $|t|$ are added so that $X(s, t)$ has either an "eight" topology if $t<0$ or a "hourglass" topology if $t>0$ as illustrated in Figure \ref{fig:bsa_vs_pca-dataset}. Overall, the dataset consists in $N=16$ networks sampled from the uniform distribution over $[\frac{1}{2}, \frac{3}{2}] \times [-\frac{1}{2}, \frac{1}{2}]$. We apply sample-limited BSA and its convex variant for $k=3$ on one hand, and tangent PCA with $2$ components on the other hand. Let us recall that tangent PCA is defined for a dataset on a Riemannian manifold as the PCA of the corresponding Riemannian logarithms in the tangent space at the Fréchet mean \cite{fletcher_principal_2004}. In the paper by Guo et al. \cite{guo_quotient_2021}, the method is redefined on the graph space introduced in Section \ref{sec:introduction} -- where the permutation matrices act instead of the orthogonal matrices. Although such a quotient space is not a manifold, one can still define a transformation analog to the logarithms by optimally aligning the networks of the dataset to their mean and then subtracting such. Note that the alignments may be computed exactly in this experiment because the networks have a small enough numbers of nodes. The two first components of tangent PCA define then a $2$-dimensional subspace. In order to interpret such a subspace, one relies in practice on the geodesics spanned by the components, in our case two straight lines of the space of symmetric matrices. As for sample-limited (convex) BSA, we base our interpretation on the three reference networks selected.
	\begin{figure}[t]
		\includegraphics[scale=.4]{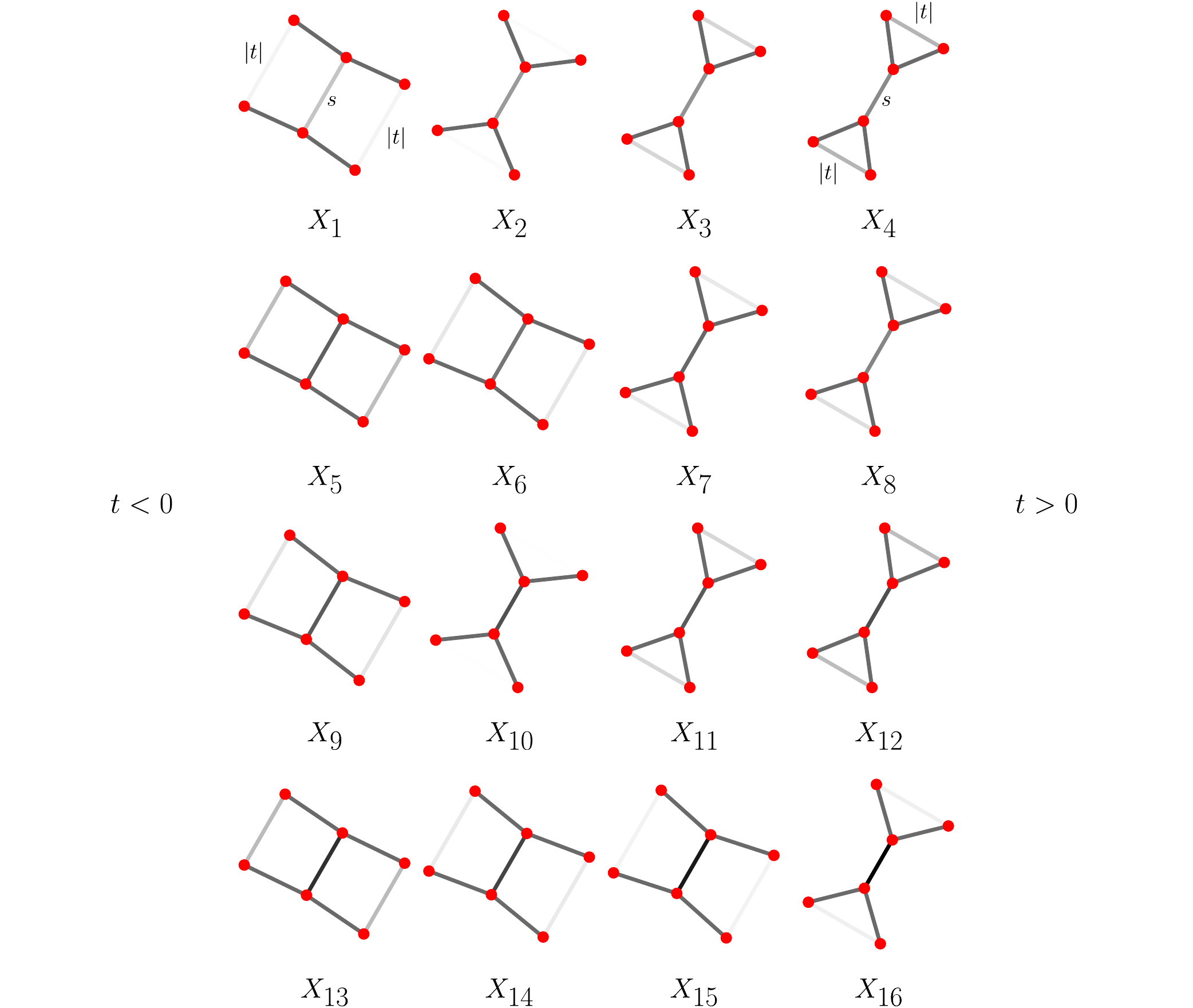}
		\caption{A two-parameters dataset. The first parameter $s$ can be read directly as the weight of the central edge. Now if the networks represent molecules, the parameter $t$ can be interpreted as the angle between the central bond and the extreme ones. It generates three different topologies: an "eight" topology, a tree topology, and a "hourglass" topology (from left to right).}
		\label{fig:bsa_vs_pca-dataset}
	\end{figure}
	\smallbreak
	A first observation is that the projections output by the three methods provide planar representations of the dataset which are comparable both visually and numerically. Precisely, the squared projection error is $4,7 \times 10^{-2}$ for tangent PCA while it is $3,4 \times 10^{-2}$ for sample-limited BSA and $5,2 \times 10^{-2}$ for sample-limited convex BSA. Now, although the two first components of tangent PCA cover most variability i.e. $88\%$ of the dataset, they display in Figure \ref{fig:bsa_vs_pca-pca} rather misleading deformations of the mean network for they generate topologies that are not observed in the original dataset. This can be explained partly by the mean itself having an unobserved topology, but most importantly because the topological changes characterizing the dataset occur along non linear deformations. More generally, due to the discrete nature of topology, structural variations within of a population of networks are difficult to reproduce from continuous objects such as components. Instead, the three reference networks picked by sample-limited BSA and its convex variant (see Figures \ref{fig:bsa_vs_pca-bsa} and \ref{fig:bsa_vs_pca-convex_bsa}) recover by definition existing topologies. Additionally, we observe that the convex constraint allows to retrieve reference networks with more different structures for they are more distant to each other, though without coping with a greater projection error.
	\begin{figure}[t]
		\includegraphics[scale=.4]{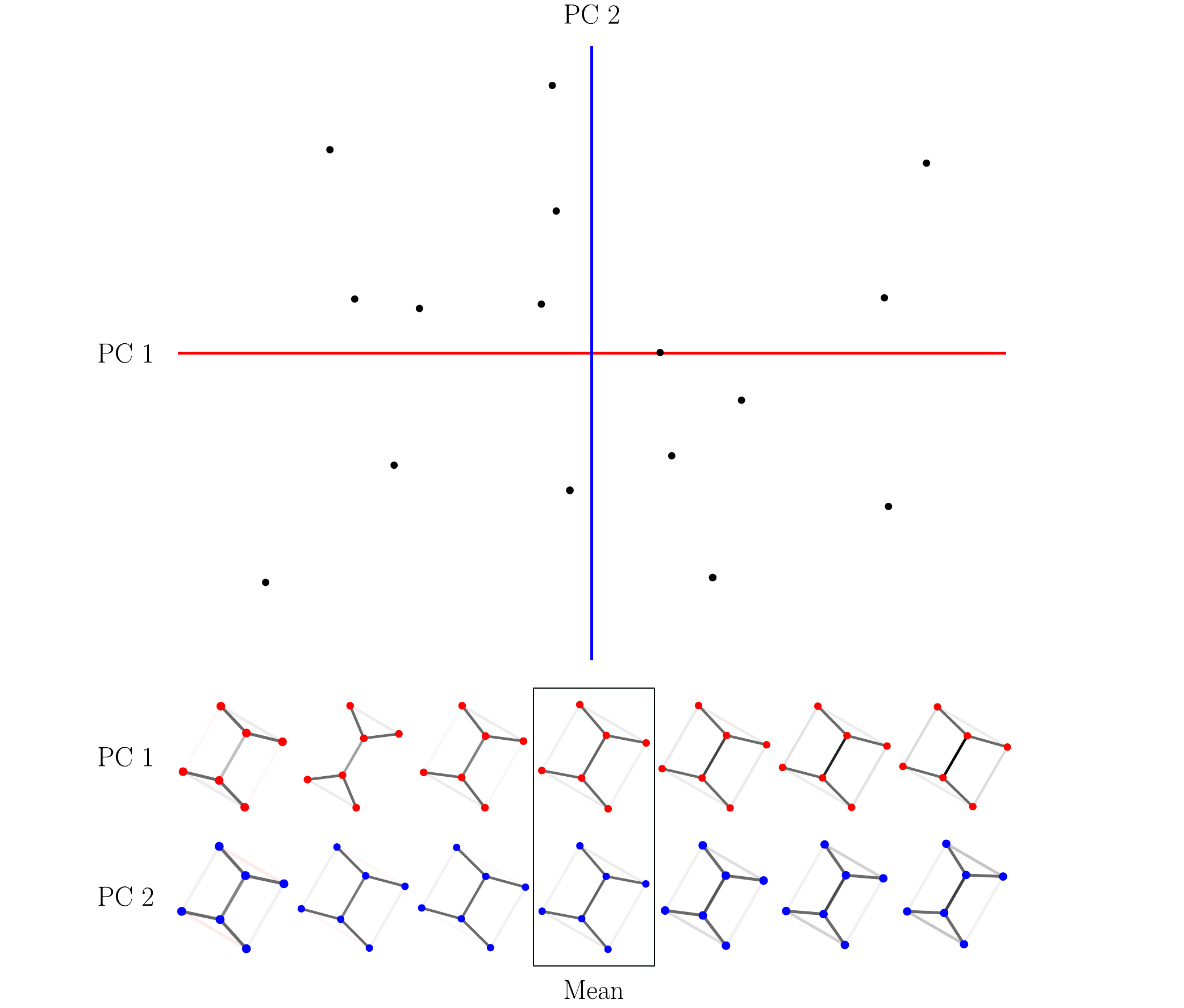}
		\caption{Tangent PCA of the two-parameters dataset. The straight lines spanned by the two first components (in red and blue) can be read as deformations of the Fréchet mean (framed). As expected, neither the mean network nor the deformed networks belong to the two-parameters family generating the dataset. Instead, the two deformations interpolate linearly the "eight" topology and the "hourglass" one, eventually resulting in negative edges. As a result, they provide a misleading interpretation of the feature subspace, and by extension of the dataset itself, if one would rely on the observation that the projection explains almost 90\% of the data variance.}
		\label{fig:bsa_vs_pca-pca}
	\end{figure}
	\begin{figure}[!ht]
		\includegraphics[scale=.4]{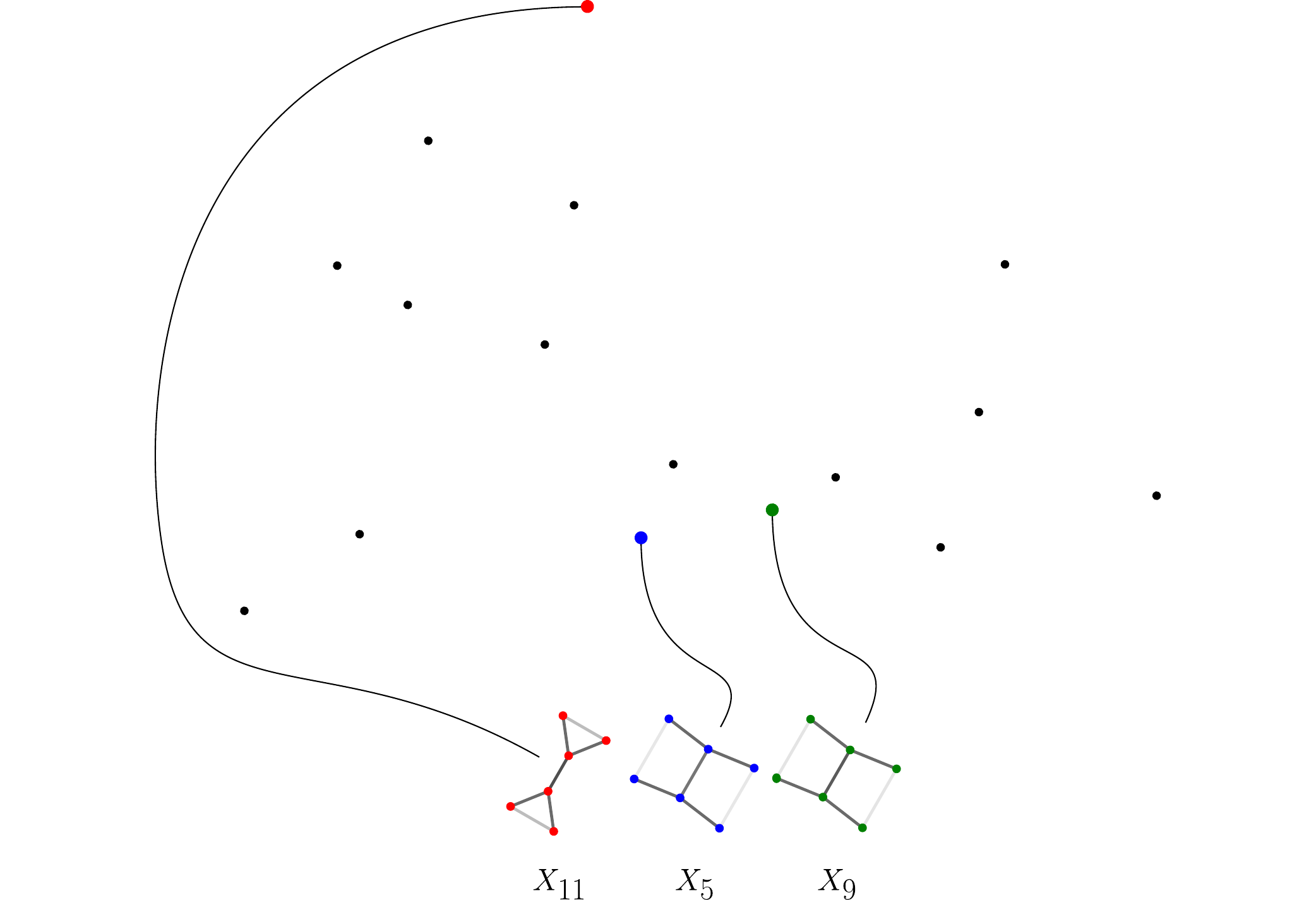}
		\caption{Sample-limited BSA of the two-parameters dataset. The barycentric coordinates thus assigned to the data points define their projection onto the plane only up to a choice of coordinates for the reference networks. We fix the position of those in such a way that their spectral distance to each other is preserved. The projected distribution is similar to that achieved with PCA, except that it appears more stretched. This is likely a consequence of the bias of PCA towards Gaussian distributions. The reference networks computed alongside the coordinates reflect the two prevailing topologies within the dataset, yet two of the three reference networks are fairly similar and therefore redundant in terms of interpretability. This issue is overcome by adding the convex constraint.}
		\label{fig:bsa_vs_pca-bsa}
	\end{figure}
	\begin{figure}[!ht]
		\includegraphics[scale=.4]{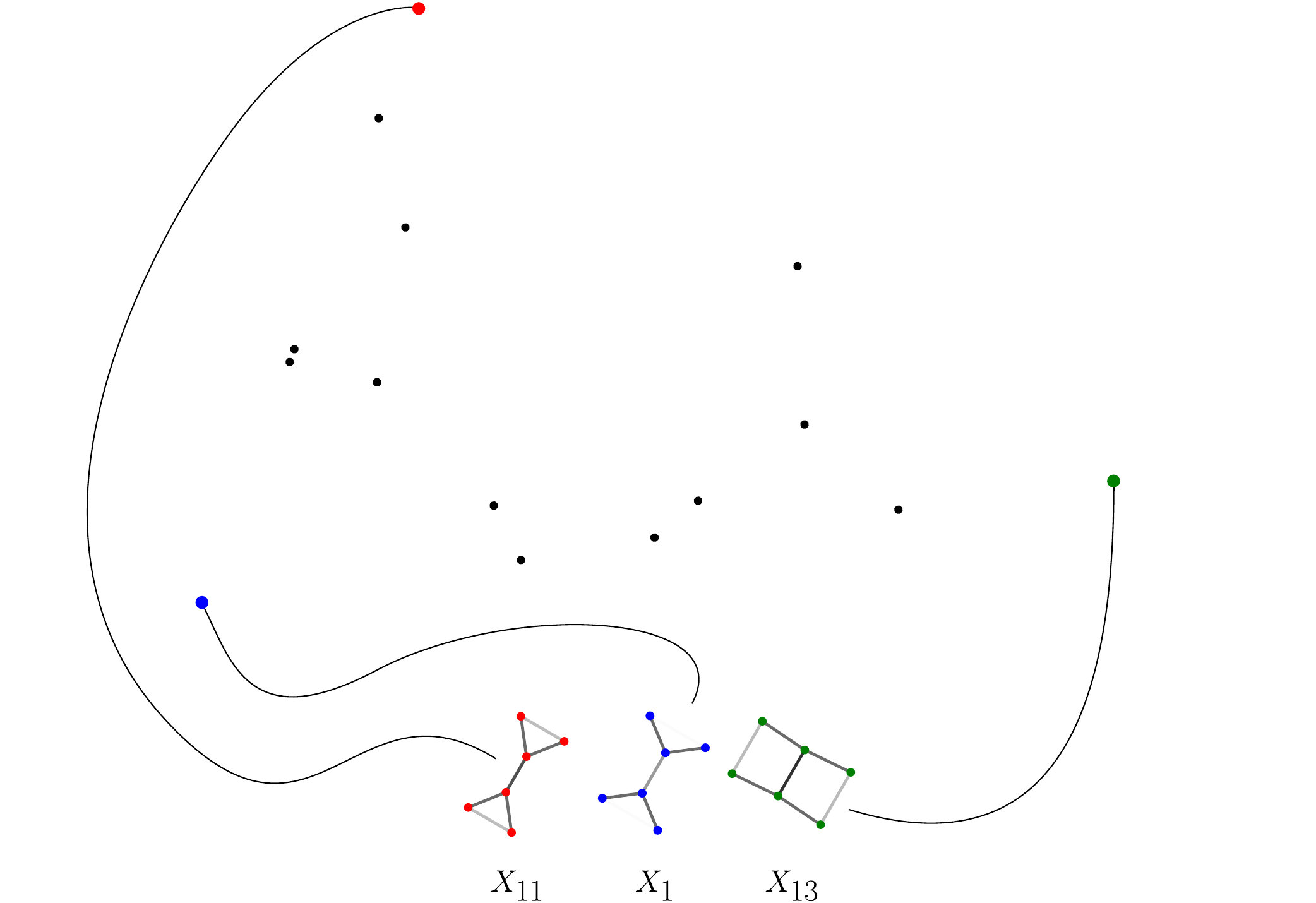}
		\caption{Sample-limited convex BSA of the two-parameters dataset. Under the convex constraint, the three reference networks selected are extreme enough for us to witness three very different structure -- and in fact, almost all three possible topologies. They therefore provide a strong intuition of the data structural variability. The other side of the coin is that the variability of the projected distribution is less than with standard sample-limited BSA. In particular, several points lie on the boundary of the triangular projection domain delimited by the three reference networks. An even more critical manifestation of this phenomenon is, for example, that the projection of the network $X_0$ is superimposed on that of the reference network $X_1$.} 
		\label{fig:bsa_vs_pca-convex_bsa}
	\end{figure}
	
	\section{Reference points of a clustered dataset} \label{sec:clustered}
	
	In this second experiment, we investigate the case of a clustered dataset. In particular, we are interested in how the reference networks selected by sample-limited BSA localize with respect to the clusters.
	\smallbreak
	We consider a data set of size $N=15$ generated from three networks with a clearly identified topology: the star, the $2$-fully meshed star \cite{mieghem_graph_2010}, and the complete network, all with same number of nodes $n=10$. The generative model simply consists in adding Gaussian noise of standard deviation $\sigma$ to the weight of the edge between any two different nodes, eventually adjusting the resulting weight to its absolute value so that it is positive. We fix $\sigma=5 \times 10^{-2}$ such that the clusters those formed are well separated in the spectral domain. Because the spectral distance is less than the Frobenius distance for any node permutation, the clusters are then also well separated in the adjacency domain as we observe in Figure \ref{fig:clustered-dataset}. We perform sample-limited backward BSA, a reformulation of sample-limited BSA as the search for the nested subspaces $\operatorname{BS}(X_{i_0}, \ldots, X_{i_k}) \subset \operatorname{BS}(X_{i_0}, \ldots, X_{i_{k-1}}) \subset \cdots \subset \operatorname{BS}(X_{i_0})$ such that the mean squared projection error increases as less as possible each time one reference point is removed \cite{pennec_barycentric_2018}. Note that the first subset is the optimal subspace found by sample-limited BSA. We set $k=N-1$ such that the mean squared projection error is initially $0$, and we analyze how the error increases as we remove reference points.
	\begin{figure}[!ht]
		\includegraphics[scale=.4]{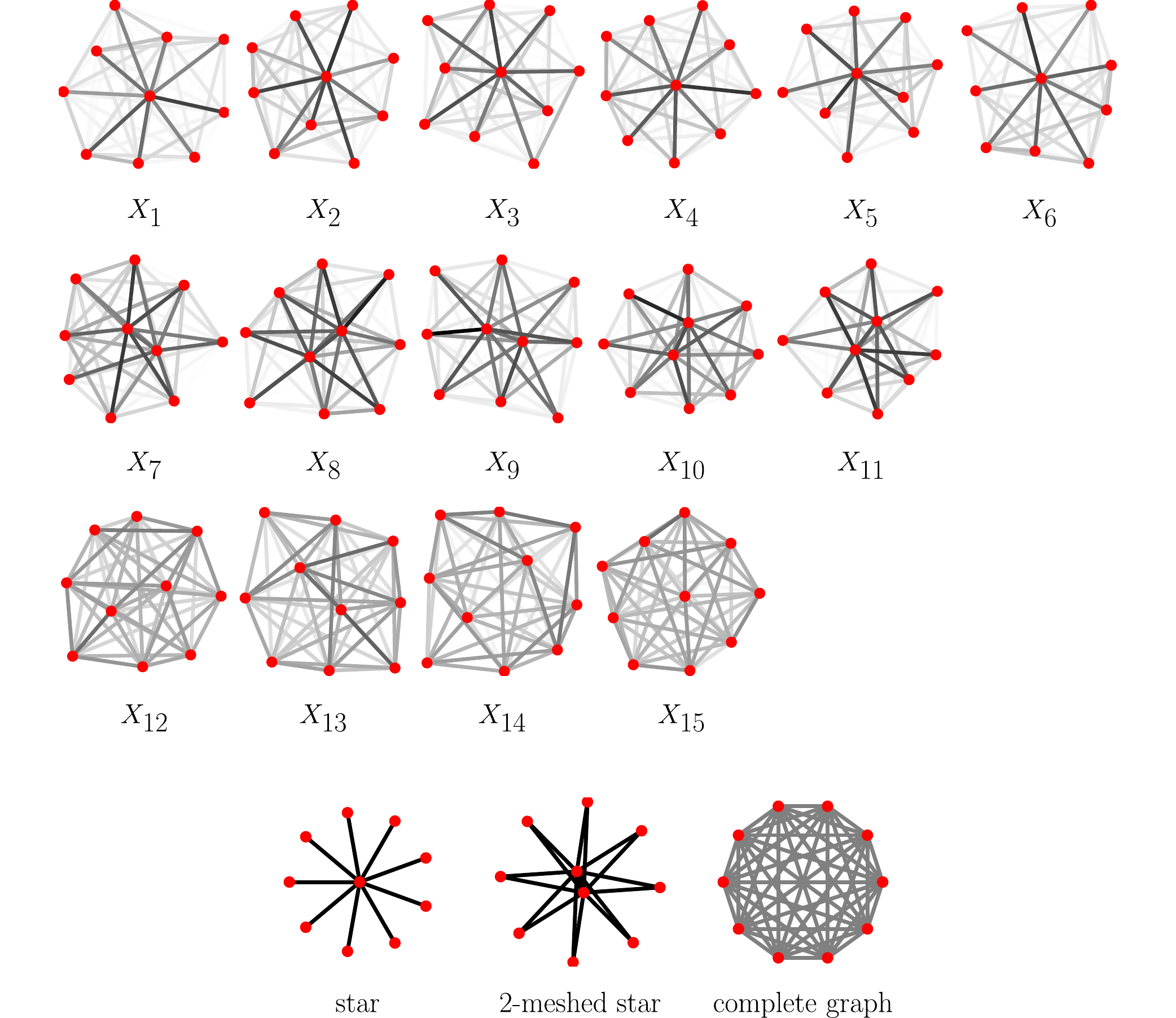}
		\caption{A data set with three clusters. The networks are clustered according to their underlying topology: star, $2$-fully meshed star or complete network (of edge weight $\frac{1}{2}$).}
		\label{fig:clustered-dataset}
	\end{figure}
	\smallbreak
	The main observation in Figure \ref{fig:clustered-dataset} is that the three last reference networks are precisely located in one cluster each, regardless of whether we apply the convex or the non-convex variant of sample-limited backward BSA. In fact, removing one of those three points results in a sudden increase of the mean squared projection error, suggesting that the two-dimensional barycentric subspace is the smallest suitable for projecting the dataset. Additionally, though sample-limited backward BSA should theoretically achieve a lower projection error than its convex variant, there is no striking discrepancy for this dataset. Still, we note that the reference networks selected by the convex variant have a more accentuated structure. While this can be attributed to them being constrained to be distant from each other as we already pointed out in the previous experiment, another possible explanation comes from the geometry of spectral graph spaces. Precisely, the three template networks are singular networks and lie on the boundary of the cone $C_{10}$. Consequently, they also lie outside of the open convex hull of the dataset such that the data points with the closer structure should be among the extreme points of the dataset.
	\begin{figure}[t]
		\includegraphics[scale=.4]{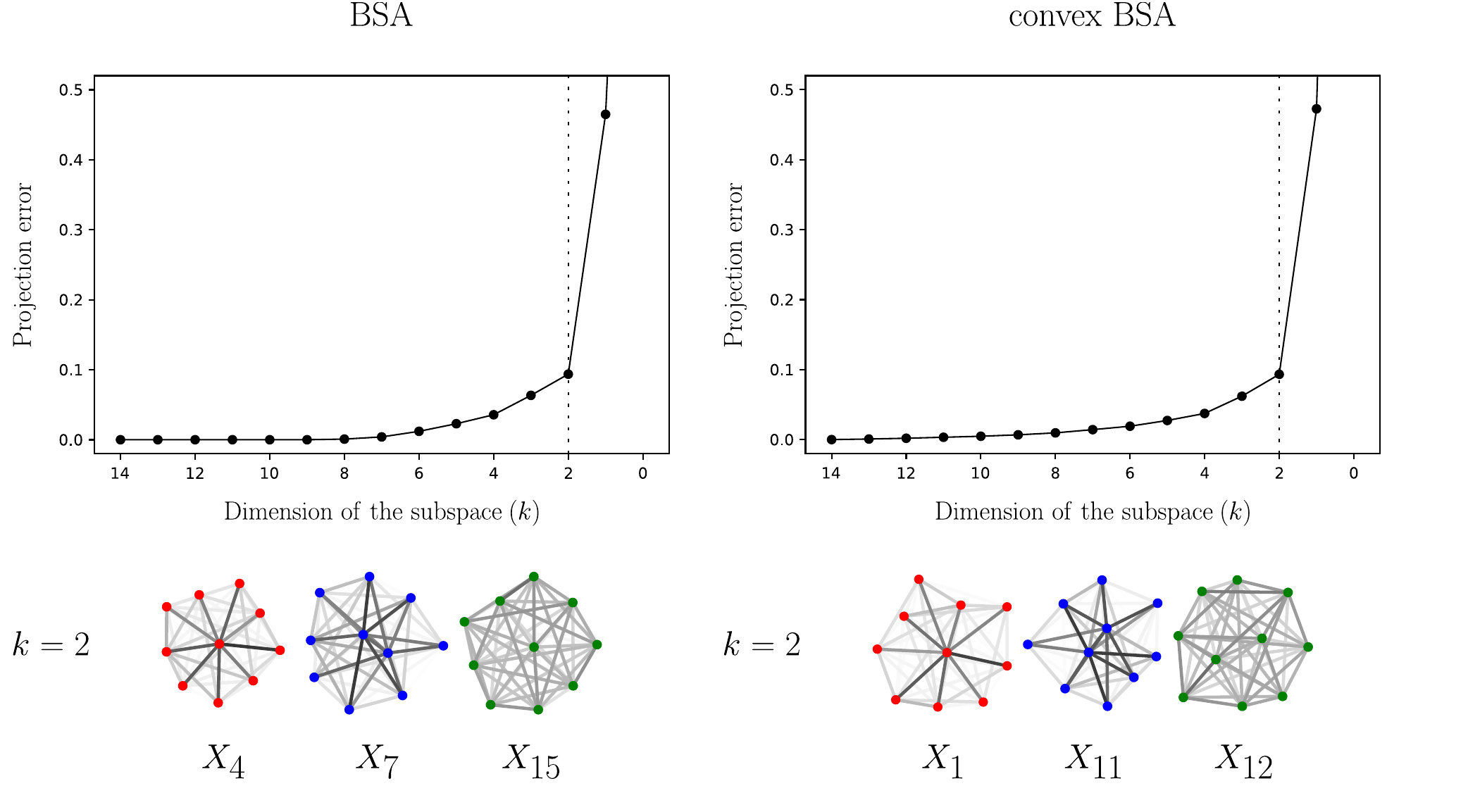}
		\caption{Sample-limited backward BSA (left) and sample-limited backward convex BSA (right) of the clustered dataset. For both methods, we plot the evolution of the mean squared projection error with the dimension of the barycentric subspace (top). Note that the y-axis is cut below the value of projection error on the zero-dimensional subspace in order to be able to observe more clearly the first increments. The key information to take from the plots is that the projection error suddenly increases -- in ratio -- when jumping from dimension $2$ to dimension $1$. Coincidentally, The two-dimensional barycentric subspace is defined by one reference network per cluster (bottom). Coming back to the plots, it is interesting to check that in the standard variant, the error does not increase before the dimension of the barycentric subspace becomes lower than that of the data space, in this case $9$. It is one less than the number of eigenvalues due to all the networks having no self-loops and therefore a trace zero, a property preserved by the action of the orthogonal group.}
		\label{fig:simulated_bsa}
	\end{figure}
	
	\section{A small case study on European airlines route maps} \label{sec:airlines}
	
	As a concluding experiment, we run a sample-limited backward convex BSA of a real dataset derived from the OpenFlight database \cite{openflight}, a database collecting airline traffic data. The dataset consists of 12 networks representing the route maps of the main European airlines as follows: each node stands for a macro-region in and around Europe (Northern Europe, Eastern Europe, Southern Europe, Western Europe, North Africa, Western Asia) and the edge between two nodes is weighted by the proportion of routes operated between the two corresponding macro-regions out of all routes operated by the airline worldwide. Although the nodes are canonically labeled by the macro-regions, we choose to place ourselves in the unlabeled framework so as to compare the strategy of the airlines independently of their geographical establishment. For example, while Air France and Aeroflot base their traffic in two different regions, Western Europe on the one hand and Eastern Europe on the other, both airlines nonetheless share the same strategy, namely a centralized strategy (see Figure \ref{fig:airlines-dataset}). Following a similar heuristic as in the previous experiment, we rely on the backward implementation of sample-limited convex BSA to select the dimension for projecting the dataset, and we interpret the lower-dimensional representation thus obtained in the light of the original data.
	\smallbreak
	The evolution of the mean squared projection error along the sequence of nested barycentric subspaces recovered suggests that the one-dimensional subspace offers the best trade-off between accuracy and dimension reduction for projecting the dataset, although the threshold is not as clear-cut as for a well-clustered dataset (see Figure \ref{fig:airlines-bsa}). The two reference airlines that define the one-dimensional subspace have two opposite topologies: a well-connected one (three-meshed star) and a sparse one (star). Schematically, these two topologies exemplify the strategy of low-cost airlines and national airlines respectively. The projection of the dataset onto the one-dimensional barycentric subspace can then be interpreted as the level of centralization of the airlines. 
	\begin{figure}[!hb]
		\includegraphics[scale=.4]{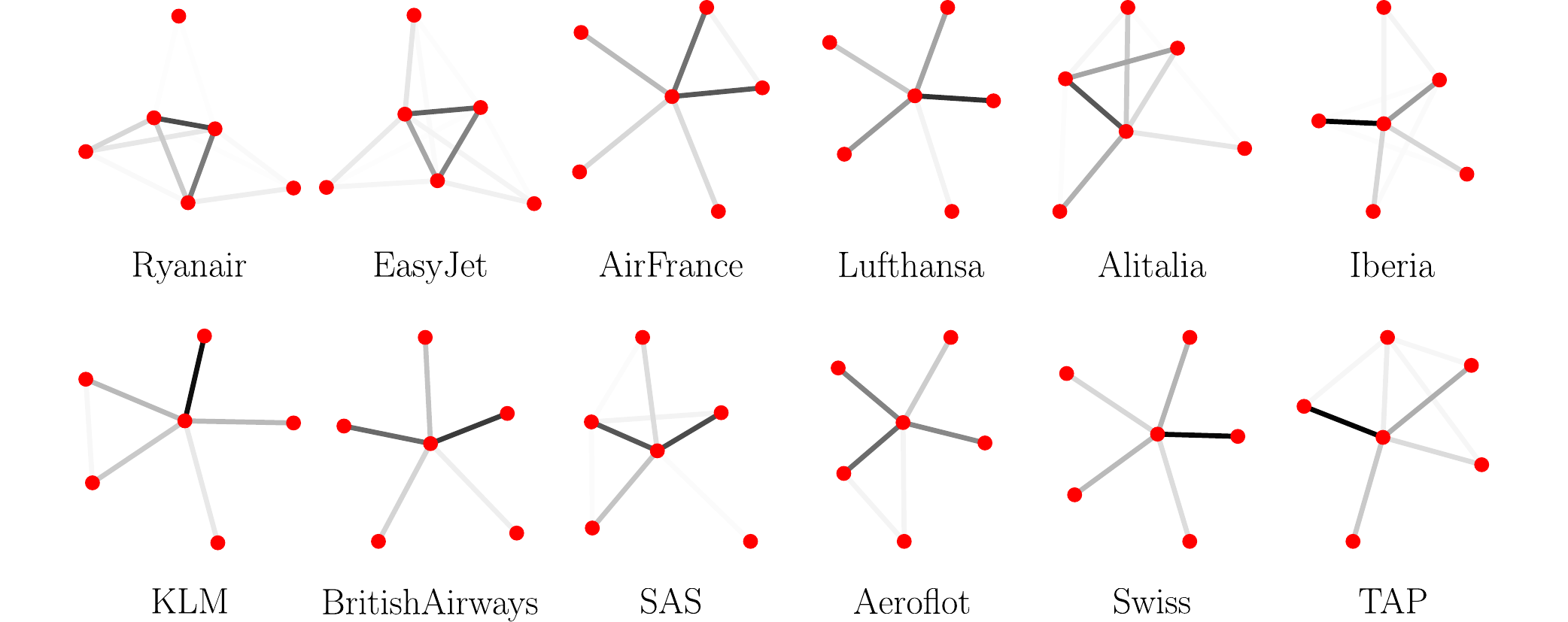}
		\caption{Route maps of 12 European airlines linking macro-regions in and around Europe. The topology of the networks reflects the strategy of the airlines and in particular the difference in this respect between low-cost airlines (three-meshed star) and most national airlines (star). }
		\label{fig:airlines-dataset}
	\end{figure}
	\begin{figure}[!ht]
		\includegraphics[scale=.4]{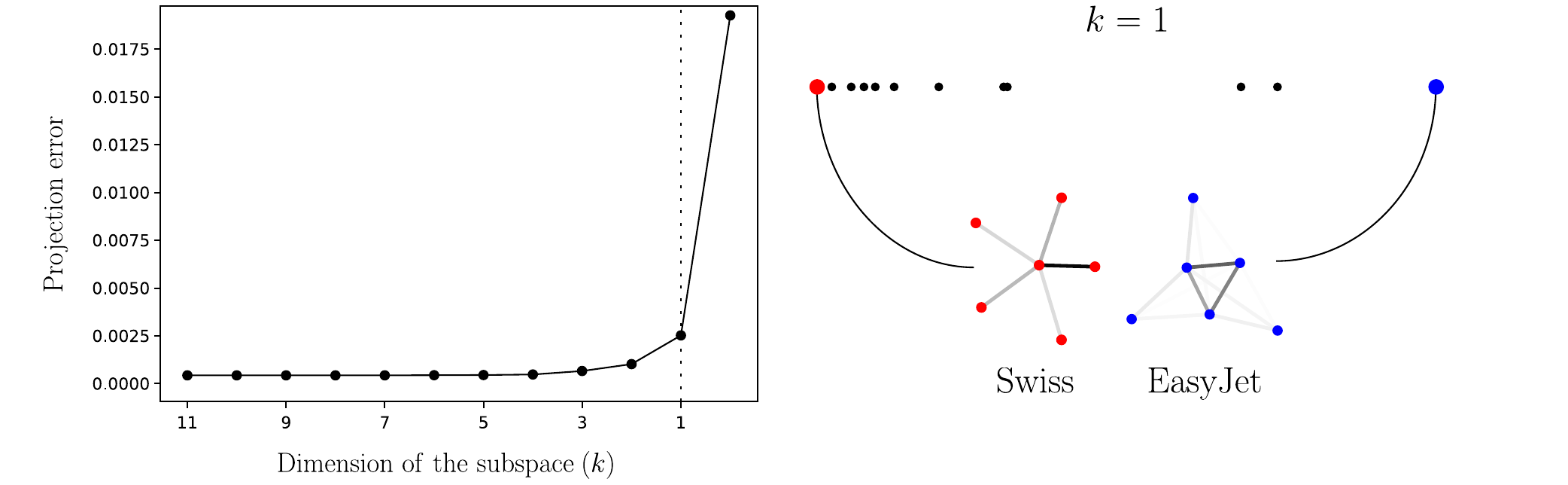}
		\caption{Sample-limited backward convex BSA of the European airline dataset. We plot the evolution of the mean squared projection error (MSE) with the dimension of the barycentric subspace (left). The most sudden increase of the projection error occurs here when jumping from dimension $1$ to dimension $0$, suggesting dimension $1$ as the lowest suitable dimension for projection the dataset. The two corresponding reference networks (right) illustrate strategies that are opposite: easyJet covers the largest number of routes, while Swiss is centralized with a focus on one particular route. Then, the projection of the dataset onto the one-dimensional subspace (right) ranks the airlines from Easy Jet to Swiss, and can therefore be interpreted as a measure of how equitably distributed their traffic is. Towards the center of the projected distribution, we have on one hand AirFrance and Aeroflot, implementing a centralized but more equally weighted strategy, and on the other hand Ryanair and Alitalia, for they are not strictly centralized but cover fewer routes than easyJet. In fact, Alitalia's topology looks similar to both that of EasyJet and Swiss, namely the three-meshed star topology and the star one.}
		\label{fig:airlines-bsa}
	\end{figure}
	
	\section{Conclusions and Further Developments}
	
	Motivated by the need to define interpretable techniques for unlabeled network-valued data, we adapt to this particular case the method called barycentric subspace analysis (BSA) and a priori defined for manifold-valued data. Typically, unlabeled networks are studied as elements of a quotient space resulting from the action of node permutations. However, this kind of group action is not compatible with BSA in terms of computational cost. To overcome such an issue, we introduce spectral graph spaces, a novel embedding obtained by relaxing the action -- by conjugation -- of the permutation group onto the set of weighted adjacency matrices into that of the orthogonal group. While the former identifies isomorphic networks, the latter identifies cospectral networks. Because spectral graph spaces are isometric to convex linear cones, namely that defined by sorted eigenvalues, they allow for efficient computations, solving the aforementioned issue. We implement BSA on such spectral graph spaces. BSA consists in computing the barycentric subspace minimizing the projection error of the dataset, where barycentric subspaces are subspaces generated by a set of reference points. Thanks to the encoding of the feature subspace by reference points, that is networks, its interpretation is straightforward. To illustrate our point, we compare BSA with tangent PCA and apply it both to simulated data sets and to a real-world data set consisting of flight connectivity networks. In summary, spectral graph space offer an efficient embedding for analyzing unlabeled networks. They represent a solid trade-off between computational efficiency and interpretability of the model. Furthermore, BSA is also easier to interpret than component-based dimensionality reduction methods such as PCA.
	\smallbreak
	Both the geometric and the methodological aspects offer further possible developments. With regard to geometric aspects, since the spectrum of the Laplacian matrix of a network is more widely studied in the literature than that of its adjacency matrix, it would be interesting to investigate an extension of spectral graph spaces to the set of Laplacian matrices. Such an extension is however not straightforward as the space of Laplacian matrices is not closed under the action of the orthogonal group. Regarding the methodological aspects, we could further explore the interplay between the optimal reference networks defined by convex BSA and the singular points of spectral graph spaces, which often correspond to networks with a very clear structure. Indeed, on one hand, the convex constraint enforces reference networks to be extreme points of the sampled distribution. On the other hand, singular networks lie at the boundary of the spectral graph spaces. Finally, we believe our framework could provide interesting insights about real-world network-valued data such as structural brain connectivity matrices and electroencephalography (EEG) data \cite{calissano_graph_2024, sporns_graph_2018, simpson_permutation_2013}.
	
	\section*{Acknowledgment} 
	We would like to thank Peter Michor for the fruitful discussion on the geometric aspects discussed in this paper. The work was supported by the ERC grant $\#786854$ G-Statistics from the European Research Council, by the Chapman Fellowship scheme by the Department of Mathematics at Imperial College and by the DFG funded Cluster of Excellence EXC 2046 MATH+ (project ID: AA1-20).
	
	\appendix
	
	\section{Proofs} \label{appendix:proofs}
	
	\begin{proof}[Proof of proposition \ref{prop:distance}]
		Consider the functional 
		\begin{equation}
			d_{X, Y} : R \mapsto \|R Y R^T - X \|^2.
		\end{equation}
		Its gradient at $R \in \Ortho(n)$ with respect to the Frobenius metric on $T_R\Ortho(n)=\Skew(n) \cdot R$ is
		\begin{equation}
			\grad d_{X, Y} (R) = 2 R Y R^T X - 2 X R Y R^T \in \Skew(n).
		\end{equation}
		Now the functional $d_{X, Y}$ is minimized only if the first order condition 
		\begin{equation}
			\grad d_{X, Y} (R) \in \Skew(n)^\perp = \Sym(n)
		\end{equation}
		applies. This condition is satisfied if and only if $X$ and $RYR^T$ commute with each other. Let then $R \in \Ortho(n)$ such that this is the case. Then $X$ and $RYR^T$ can be diagonalized in a common orthonormal basis
		\begin{equation}
			X = Q_{X, Y} D_X Q_{X, Y} \qquad \text{and} \qquad RYR^T = Q_{X, Y} D_Y Q_{X, Y}
		\end{equation}
		and we have
		\begin{equation}
			d_{X, Y}(R) = \|D_X - D_Y\|^2.
		\end{equation}
		Up to a permutation $\sigma\in \fS(n)$, this amounts exactly to
		\begin{equation}
			d_{X, Y}(R) = \sum_i \left\vert\lambda_i(X) - \lambda_{\sigma(i)}(Y)\right\vert^2.
		\end{equation}
		Finally, the rearrangement inequality states that
		\begin{equation}
			\sum_{i=1}^n \lambda_i \mu_{\sigma(i)} \leq \sum_{i}^n \lambda_i \mu_i
		\end{equation}
		for any permutation $\sigma$. Therefore, the functional $d_{X, Y}$ is minimal only if $\sigma = id$ and its minimum is the quantity
		\begin{equation} \label{eq:align}
			d_{X, Y}(R) = \sum_i \left\vert\lambda_i(X) - \lambda_i(Y)\right\vert^2.
		\end{equation} 
		Moreover, the reader may wan to check that such a choice of $\sigma$ corresponds exactly to taking $R = Q_X Q_Y^T$ where $Q_X$ and $Q_Y$ are such that $X = Q_X \operatorname{diag}(\lambda_1(X), \ldots, \lambda_n(X)) Q_X^T$ and $Y = Q_Y \operatorname{diag}(\lambda_1(Y), \ldots, \lambda_n(Y)) Q_Y^T$. Conversely, if $R = Q_X Q_Y^T$ then Equation \ref{eq:align} is satisfied.
	\end{proof}
	
	\begin{proof}[Proof of proposition \ref{prop:horizontal}]
		Let $X\in \Sym(n)^\ast$. The horizontal subspace at $X$ is defined by the following construction
		\begin{equation}
			H_X \Sym(n)^\ast + (\ker d_X\pi)^\perp = T_X \Sym(n)^\ast.
		\end{equation}
		The subspace $\ker d_X\pi$ consists exactly of the tangent vectors at $X$ which are also tangent to the fiber $F$ of $\pi$ at $\pi(X)$. Consider then a curve $\gamma$ in the fiber $F$ such that $\gamma(0) = X$. It can be written as
		\begin{equation}
			\gamma(s) = R(s) X R(s)^T
		\end{equation}
		where $R$ is a curve in $\Ortho(n)$ with $R(0) = Id$. Then let us compute the derivative of $\gamma$ at $0$
		\begin{equation}
			\dot\gamma(0) = \dot R(0) X R(0)^T + R(0) X \dot R(0)^T.
		\end{equation}
		Since $\Ortho(n)$ is a Lie group of Lie algebra $\Skew(n)$, then there exists $A\in \Skew(n)$ such that $\dot R (0) = A$ and we have
		\begin{equation}
			\dot\gamma(0) = A X + X A^T.
		\end{equation}
		Now let $U\in \Hor_X \Sym(n)^\ast$. Then for all $A \in \Skew(n)$ we have
		\begin{equation}
			\left \langle U, AX - XA \right \rangle = 0,
		\end{equation}
		that is
		\begin{equation}
			\left \langle XU - UX, A \right \rangle = 0.
		\end{equation}
		Since $XU - UX$ is skew-symmetric, this is equivalent to 
		\begin{equation}
			XU - UX = 0.
		\end{equation}
		Finally, if $Q_X\in \Ortho(n)$ is such that $x = Q_X \operatorname{diag}(\lambda_1(X), \ldots, \lambda_n(X)) Q_X^T$, then $U$ commutes with $X$ if and only if $Q_X^TUQ_X$ is a diagonal matrix.
	\end{proof}
	
	\begin{proof}[Proof of theorem \ref{th:barycentric_subspace}]
		Let $X \in \Sym(n)$. Then $\pi(X)$ belongs to the barycentric subspace of $\pi(A_0), \ldots, (A_k)$ if and only if there exists $w_0, \ldots, w_n \in \bR$ summing to $1$ such that
		\begin{equation}
			\sum_{i=0}^k w_i \log_{\pi(X)}\left(\pi(A_i)\right) = 0.
		\end{equation}
		Following the definition of the logarithm, this condition translates directly into 
		\begin{equation}
			\sum_{i=0}^k w_i (\lambda(A_i) - \lambda(X)) = 0.
		\end{equation}
		Adding the constraint that $w_0, \ldots, w_k$ have to sum to $1$, we get
		\begin{equation} \label{eq:graph_barycenter}
			\lambda(X) = \sum_{i=0}^k w_i \lambda(A_i).
		\end{equation}
		Now we recall that by definition of the map $\lambda$, we must have $\lambda(X)\in C_n$ . Therefore, $w_0, \ldots, w_k$ should satisfy
		\begin{equation} \label{eq:graph_bs_constraints}
			\forall 1 \leq r < n, \quad \sum_{i=0}^k w_i \lambda_r(A_i) \leq \sum_{i=0}^k w_i \lambda_{r+1}(A_i).
		\end{equation}
		Let us now derive from Equations \ref{eq:graph_barycenter} and \ref{eq:graph_bs_constraints} a simple geometric constraint on $\lambda(X)$. Let us denote
		\begin{equation}
			\Lambda = 
			\begin{bmatrix} 
				\lambda_1(A_0) & \cdots & \lambda_1(A_k) \\ 
				\vdots & & \vdots \\ 
				\lambda_n(A_0) & \cdots & \lambda_n(A_k) \\ 
				1 & \cdots & 1 
			\end{bmatrix} 
			\quad \text{and} \quad \theta_r = 
			\begin{bmatrix} 
				\lambda_{r+1}(A_0) - \lambda_r(A_0) \\ 
				\vdots \\ 
				\lambda_{r+1}(A_k) - \lambda_r(A_k) 
			\end{bmatrix}
		\end{equation}
		On one hand, Equation \ref{eq:graph_bs_constraints} may be rearranged in
		\begin{equation}
			\forall 1 \leq q < p, \quad \begin{bmatrix} w_0 & \cdots & w_k \end{bmatrix}^T \theta_r \geq 0.
		\end{equation}
		On the other hand, Equation \ref{eq:graph_barycenter} together with the constraint $\sum_{i=0}^k w_i = 1$ is expressed as
		\begin{equation}
			\begin{bmatrix} \lambda_1(X) & \cdots & \lambda_n(X) & 1\end{bmatrix} = \Lambda \begin{bmatrix} w_0 & \cdots & w_k \end{bmatrix}.
		\end{equation}
		Notice that $\theta_r\in \ker (\Lambda)^\perp$. It follows that $\theta_r \in \mathrm{im}(\Lambda^T)$.
		For all $1 \leq r < n$, let then $\alpha_{r1}, \cdots, \alpha_{rn}$ and $\beta_r$ solve
		\begin{equation}
			\Lambda^T \begin{bmatrix} \alpha_{r1} & \cdots & \alpha_{rn} & - \beta_r\end{bmatrix} = \theta_r.
		\end{equation}
		Then Equation \ref{eq:graph_bs_constraints} is equivalent to $\lambda(X)$ satisfying $\alpha_r^T \lambda(X) \geq \beta_r$.
	\end{proof}
	
	\section{Illustrating Theorem \ref{th:barycentric_subspace} for $k=2$} \label{appendix:barycentric_subspace}
	
	The barycentric subspace of three reference networks $\pi(A_0),\pi(A_1)$ and $\pi(A_2) \in \Gamma_n$ is a two-dimensional polytope, that is a polygon. It can be drawn isometrically in the plane in the following way. Let $a_0, a_1$ and $a_2 \in \bR^2$ be such that $\|a_i - a_j\|_{\bR^2} = \|\lambda(A_i) - \lambda(A_j)\|_{\bR^n}$. Then for $1\leq r \leq n-1$ the equation in the plane of the $r$-th facet of the polygon is 
	\begin{equation}
		\alpha_{r1} x + \alpha_{r2} y \geq \beta_r
	\end{equation}
	where the three coefficients $\alpha_{r1}, \alpha_{r2}, \beta_{r}$ solve
	\begin{equation}
		\begin{bmatrix} 
			a_{01} & a_{02} & 1 \\ a_{11} & a_{12} & 1 \\ a_{21} & a_{22} & 1 
		\end{bmatrix} 
		\begin{bmatrix} 
			\alpha_{r1} \\ 
			\alpha_{r2} \\ 
			-\beta_r
		\end{bmatrix} 
		= 
		\begin{bmatrix}
			\lambda_{r+1}(A_0) - \lambda_r(A_0) \\ 
			\lambda_{r+1}(A_1) - \lambda_r(A_1) \\ 
			\lambda_{r+1}(A_2) - \lambda_r(A_2) 
		\end{bmatrix}.
	\end{equation}
	In Figure \ref{fig:barycentric_subspace}, we visualize such a polygon for different choices of $n$ and reference networks.
	\begin{figure}[!ht]
		\includegraphics[scale=.4]{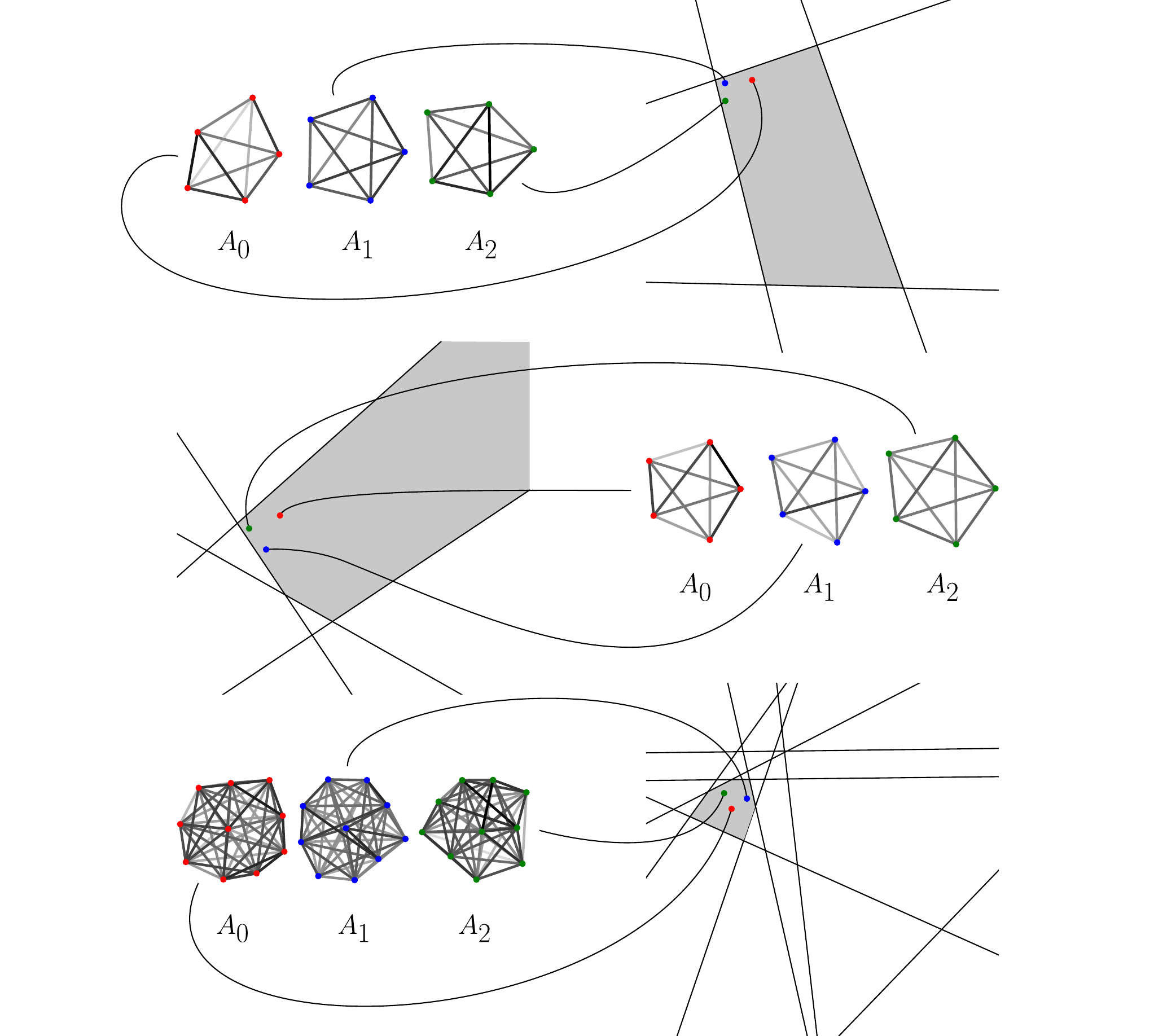}
		\caption{Barycentric subspace of 3 networks. (Top) The barycentric subspace of the 3 networks is isometric to a closed tetragon. It corresponds to the situation where the subspace has the maximum number of sides, that is one less than the number of nodes of the reference networks. (Center) The second example illustrates a situation where the barycentric subspace is isometric to a polygon that is not closed. (Bottom) In the last example, where we doubled the number of nodes, the intersection of the half-planes is not minimal and the corresponding polygon is an hexagon.}
		\label{fig:barycentric_subspace}
	\end{figure}
	
	\bibliographystyle{imsart-number}
	\bibliography{biblio}
	
\end{document}